\newtheorem{thm}{Theorem}[section]
\newtheorem{prop}[thm]{Proposition}
\newtheorem{lemma}[thm]{Lemma}
\newtheorem{remark}[thm]{Remark}
\newcommand{\Pic}{\mathop{\rm Pic}\nolimits}
\DeclareMathOperator{\Hom}{Hom}
\DeclareMathOperator{\Ext}{Ext}
\DeclareMathOperator{\RHom}{RHom}
\DeclareMathOperator{\lotimes}{\otimes^{L}}
\newcommand{\caltor}{\mathop{{\mathcal T\!or}}\nolimits}
\DeclareMathOperator{\Coker}{Coker}
\DeclareMathOperator{\End}{End}
\DeclareMathOperator{\length}{length}
\DeclareMathOperator{\module}{mod}
\DeclareMathOperator{\Gr}{Gr}
\DeclareMathOperator{\ev}{ev}
\title[
Nef vector bundles on a quadric surface
]{
Nef vector bundles on a quadric surface
with first Chern class $(2,2)$
}
\thanks{
This work was partially supported by 
JSPS KAKENHI (C) Grant Number 21K03158.
}
\author{Masahiro Ohno
}
\address{Graduate School of Informatics and Engineering,
The University of Electro-Communications,
Chofugaoka 1-5-1,
Chofu-shi,
Tokyo, 182-8585 Japan
}
\email{masahiro-ohno@uec.ac.jp}
\subjclass[2020]{Primary 
14J60;
Secondary 
14F08,
14J45, 14F06}
\keywords{nef vector bundles,
quadric surfaces, full strong exceptional collections}
\begin{document}
\begin{abstract}
We classify nef vector bundles on a smooth quadric surface
with first Chern class $(2,2)$ 
over an algebraically closed field of characteristic zero.
\end{abstract}

\maketitle


\section{Introduction}
In \cite[Sect.~3]{swCompo} and \cite[Sect.~2 Lemmas 1 and 2]{pswnef},
Peternell-Szurek-Wi\'{s}niewski
classified
nef vector bundles on a smooth quadric surface 
$\mathbb{Q}^2$
with first Chern class $(1,1)$
over an algebraically closed field $K$ of characteristic zero.
In \cite[Theorem~7.1]{MR4453350}, we 
provided
a different proof of this classification,
which was based on an analysis with a full strong exceptional collection
of line bundles on $\mathbb{Q}^2$.
Following this approach, we classified those 
with first Chern class $(2,1)$  in \cite[Theorem~0.1]{MR4052950}. 

In this paper, we 
classify nef vector bundles on 
$\mathbb{Q}^2$ with first Chern class $(2,2)$.
The precise statement is as follows:

\begin{thm}\label{Chern(2,2)}
Let $\mathcal{E}$ be a nef vector bundle of rank $r$
on a smooth quadric surface $\mathbb{Q}^2$ over an algebraically closed field $K$ 
of characteristic zero.
Suppose that 
$\det\mathcal{E}\cong \mathcal{O}(2,2)$.
Then 
$\mathcal{E}$ is isomorphic to one of the following vector bundles
or fits in one of the following exact sequences:
\begin{enumerate}
\item[$(1)$]
$\mathcal{O}(2,2)\oplus \mathcal{O}^{\oplus r-1}$;
\item[$(2)$] 
$\mathcal{O}(2,1)\oplus\mathcal{O}(0,1)\oplus  \mathcal{O}^{\oplus r-2}$;
\\
$\mathcal{O}(1,2)\oplus
\mathcal{O}(1,0)\oplus  \mathcal{O}^{\oplus r-2}$;
\\
$($We do not exhibit the cases
obtained by replacing $(a,b)$ with $(b,a)$ in the following:$)$
\item[$(3)$] $\mathcal{O}(1,1)^{\oplus 2}
\oplus  \mathcal{O}^{\oplus r-2}$;
\item[$(4)$] 
$0\to \mathcal{O}
\xrightarrow{\iota}
\mathcal{O}(1,1)\oplus
\mathcal{O}(1,0)\oplus  
\mathcal{O}(0,1)\oplus  
\mathcal{O}^{\oplus r-2}\to \mathcal{E}\to 0$;\\
$($More precisely, either the composite of $\iota$ and the projection 
\[\mathcal{O}(1,1)\oplus
\mathcal{O}(1,0)\oplus  
\mathcal{O}(0,1)\oplus  
\mathcal{O}^{\oplus r-2}\to \mathcal{O}\]
is zero, or 
$\mathcal{E}$ is isomorphic to $\mathcal{O}(1,1)\oplus
\mathcal{O}(1,0)\oplus  
\mathcal{O}(0,1)\oplus  
\mathcal{O}^{\oplus r-3}$.$)$
\item[$(5)$] $0\to \mathcal{O}(-1,-1)\to \mathcal{O}(1,1)\oplus
\mathcal{O}^{\oplus r}\to \mathcal{E}\to 0$;
\item[$(6)$]  $0\to \mathcal{O}^{\oplus 2}\to \mathcal{O}(1,0)^{\oplus 2}\oplus
\mathcal{O}(0,1)^{\oplus 2}\oplus  
\mathcal{O}^{\oplus r-2}\to \mathcal{E}\to 0$;
\\
$($More precisely, 
\begin{enumerate}
\item[$(6$-$1)$] 
$0\to \mathcal{O}\to \mathcal{O}(2,0)\oplus
\mathcal{O}(0,1)^{\oplus 2}\oplus  
\mathcal{O}^{\oplus r-2}\to \mathcal{E}\to 0$;
\\ In this case, we have
\begin{enumerate}
\item[$(6$-$1$-$1)$] $\mathcal{O}(2,0)\oplus\mathcal{O}(0,2)\oplus\mathcal{O}^{\oplus r-2}$;
\item[$(6$-$1$-$2)$] $\mathcal{O}(2,0)\oplus\mathcal{O}(0,1)^{\oplus 2}\oplus  \mathcal{O}^{\oplus r-3}$;
\end{enumerate}
\item[$(6$-$2)$] $\mathcal{O}(1,0)^{\oplus 2}\oplus  
\mathcal{O}(0,1)^{\oplus 2}\oplus  
\mathcal{O}^{\oplus r-4}$$)$
\item[$(6$-$3)$] $0\to \mathcal{O}(0,-1)\to \mathcal{O}(1,0)^{\oplus 2}\oplus  
\mathcal{O}(0,1)\oplus  
\mathcal{O}^{\oplus r-2}\to \mathcal{E}\to 0$;
\end{enumerate}
\item[$(7)$] $0\to \mathcal{O}(-1,-1)
\oplus \mathcal{O}(-1,0)
\oplus \mathcal{O}(0,-1)
\to \mathcal{O}^{\oplus r+3}\to \mathcal{E}\to 0$;
\item[$(8)$] $0\to \mathcal{O}(-1,-2)
\to \mathcal{O}(1,0)\oplus 
\mathcal{O}^{\oplus r}\to \mathcal{E}\to 0$;
\item[$(9)$] $0\to \mathcal{O}(-1,-1)^{\oplus 2}\to   
\mathcal{O}^{\oplus r+2}\to \mathcal{E}\to 0$;
\item[$(10)$] $0\to \mathcal{O}(-2,-2)\to   
\mathcal{O}^{\oplus r+1}\to \mathcal{E}\to 0$;
\item[$(11)$] $0\to \mathcal{O}(-2,-2)\to   
\mathcal{O}^{\oplus r+1}\to \mathcal{E}\to k(p)\to 0$;
\item[$(12)$] $0\to \mathcal{O}(-2,-2)\to   
\mathcal{O}^{\oplus r}\to \mathcal{E}\to \mathcal{O}\to 0$;
\item[$(13)$] $0\to \mathcal{O}(-1,-1)^{\oplus 4}\to   
\mathcal{O}^{\oplus r}\oplus\mathcal{O}(-1,0)^{\oplus 2}
\oplus \mathcal{O}(0,-1)^{\oplus 2}
\to \mathcal{E}\to 0$.
\end{enumerate}
\end{thm}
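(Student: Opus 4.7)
The plan is to follow the strategy of \cite{MR4453350} and \cite{MR4052950}: analyze $\mathcal{E}$ via the full strong exceptional collection $(\mathcal{O}, \mathcal{O}(1,0), \mathcal{O}(0,1), \mathcal{O}(1,1))$ on $\mathbb{Q}^2\cong\mathbb{P}^1\times\mathbb{P}^1$, combining nefness-driven cohomology vanishings with a Beilinson-type resolution, so that only finitely many combinatorial shapes of resolution can occur.

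First I would establish $H^2(\mathcal{E}(a,b))=0$ for $(a,b)\in\{(0,0),(-1,0),(0,-1),(-1,-1)\}$. By Serre duality each such group is dual to $\Hom(\mathcal{E},\mathcal{O}(-a-2,-b-2))$; any nonzero element produces, after saturating its image inside $\mathcal{O}(-a-2,-b-2)$, a torsion-free rank-one quotient of $\mathcal{E}$ whose reflexive hull is a line bundle $\mathcal{O}(c,d)$ with $c\le -a-2$ and $d\le -b-2$. As the double dual of a torsion-free quotient of a nef bundle, $\mathcal{O}(c,d)$ is nef, so $c,d\ge 0$, forcing $a\le -2$ and $b\le -2$ and contradicting our choices of $(a,b)$. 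Combining these vanishings with Hirzebruch--Riemann--Roch on $\mathbb{Q}^2$ gives $\chi(\mathcal{E})=r+8-c_2$, $\chi(\mathcal{E}(-1,0))=\chi(\mathcal{E}(0,-1))=6-c_2$, and $\chi(\mathcal{E}(-1,-1))=4-c_2$, writing $c_2=c_2(\mathcal{E})$, so each of the four quantities $h^0-h^1$ of the corresponding twist becomes an explicit function of $r$ and $c_2$.

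Second, I would invoke the Beilinson-type spectral sequence attached to the collection: every coherent sheaf $\mathcal{F}$ on $\mathbb{Q}^2$ is quasi-isomorphic to the convolution of a bounded complex whose terms are direct sums of $\mathcal{O}(-1,-1)$, $\mathcal{O}(-1,0)$, $\mathcal{O}(0,-1)$, and $\mathcal{O}$, with multiplicities $h^p$ of the twists $\mathcal{F}(-1,-1)$, $\mathcal{F}(-1,0)$, $\mathcal{F}(0,-1)$, $\mathcal{F}$ respectively. Applied to $\mathcal{F}=\mathcal{E}$, the $H^2$-vanishings above collapse the double complex into two rows, so only $h^0$'s and $h^1$'s enter. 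Splitting off trivial summands absorbed by the evaluation map $H^0(\mathcal{E})\otimes\mathcal{O}\to\mathcal{E}$ then reduces the analysis to finitely many numerical configurations, indexed by $c_2$ and by the triple $(h^1(\mathcal{E}(-1,-1)),h^1(\mathcal{E}(-1,0)),h^1(\mathcal{E}(0,-1)))$.

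The remainder of the argument is a case-by-case geometric interpretation of the admissible configurations. For each one I would write down the induced two-term complex of line bundles, match it with one of the resolutions in (1)--(13), verify that the differential is sufficiently generic for the cokernel to be locally free (or, in items (11) and (12), identify the torsion quotient), and confirm nefness of the resulting sheaf. The main obstacle I anticipate is item (6), where a single numerical signature branches into the three sub-cases (6-1), (6-2), (6-3) depending on how the evaluation maps $H^0(\mathcal{E}(-1,0))\otimes\mathcal{O}(1,0)\to\mathcal{E}$ and $H^0(\mathcal{E}(0,-1))\otimes\mathcal{O}(0,1)\to\mathcal{E}$ interact. To distinguish them I would restrict $\mathcal{E}$ to the fibers of the two rulings $\pi_i\colon\mathbb{Q}^2\to\mathbb{P}^1$, read off the generic splitting types on $\mathbb{P}^1$, and use semi-continuity to pin down the global structure.
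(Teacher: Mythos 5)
Your scaffolding (the exceptional collection $(\mathcal{O},\mathcal{O}(1,0),\mathcal{O}(0,1),\mathcal{O}(1,1))$, the $H^2$-vanishings, Riemann--Roch, and the Beilinson/Bondal spectral sequence) is indeed the framework the paper announces in its introduction, and your Serre-duality argument for $H^2(\mathcal{E}(a,b))=0$ is a valid substitute for Lemma~\ref{Ext^2vanishing}. But the proposal stops exactly where the proof begins. The central unjustified claim is that the classification is ``indexed by $c_2$ and by the triple $(h^1(\mathcal{E}(-1,-1)),h^1(\mathcal{E}(-1,0)),h^1(\mathcal{E}(0,-1)))$.'' It is not: for $c_2=8$ the items $(12)$ and $(13)$ have identical cohomology tables, and they are separated only by whether the differential $\mathcal{O}(-1,-1)^{\oplus 4}\to\mathcal{O}(-1,0)^{\oplus 2}\oplus\mathcal{O}(0,-1)^{\oplus 2}$ has kernel $\mathcal{O}(-2,-2)$ or $0$; proving that no other degeneration occurs (and that the cokernel is then $\mathcal{O}$, resp.\ a rank-one torsion-free sheaf on a $(2,2)$-divisor, rather than some other torsion sheaf) is the content of \ref{c_2=8}, a multi-page analysis of the possible maps $\mathcal{O}(-2,-1)\oplus\mathcal{O}(-1,-1)^{\oplus 2}\to\mathcal{O}(-1,0)^{\oplus 2}\oplus\mathcal{O}(0,-1)$ constrained by the condition that $E_2^{-1,1}$ admit no negative-degree quotient on any line. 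Your instruction to ``verify that the differential is sufficiently generic for the cokernel to be locally free'' is methodologically backwards: the differential is given, not chosen, and the non-generic differentials are precisely what produce items $(11)$--$(13)$; one must classify \emph{all} nefness-compatible differentials, which is where all the work lies.

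Two further gaps. First, when $\mathcal{E}$ does contain $\mathcal{O}(a,b)$ with $(a,b)$ effective and nonzero, the groups $H^0(\mathcal{E}(-1,0))$, $H^0(\mathcal{E}(0,-1))$, $H^0(\mathcal{E}(-1,-1))$ are nonzero, the spectral sequence has two genuinely interacting rows, and your numerical indexing by the $h^1$-triple breaks down entirely; the paper avoids this by a separate induction, quotienting by the line subsheaf and feeding $\mathcal{F}^{\vee\vee}$ into the known classifications for $c_1=(2,1)$ and $(1,1)$ (Proposition~\ref{HalfMaximal}, Theorem~\ref{nearToMaximal}, Theorem~\ref{Chern(2,1)}), followed by a careful study of the torsion $\mathcal{Q}=\mathcal{F}^{\vee\vee}/\mathcal{F}$ (its length and its position relative to the rulings). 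This produces items $(1)$--$(8)$ and is not recoverable from your outline. Second, you never address $h^1(\mathcal{E})>0$, which does occur (item $(10)$ has $h^1=1$): your plan to split off trivial summands ``absorbed by the evaluation map'' presupposes information about $\Coker(\ev)$ that is only available \emph{after} the $h^1=0$ classification; the paper handles this by the extension $0\to\mathcal{E}\to\mathcal{E}_0\to H^1(\mathcal{E})\otimes\mathcal{O}\to 0$ and the identification of $\Coker(\ev)$ for the $c_2=8$ bundles (Remarks~\ref{SupportO} and \ref{support(2,2)}). As written, the proposal is a plausible plan whose execution would have to reproduce essentially all of Sections~\ref{neartomaximal}--\ref{prooflatter}; none of that execution is present.
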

Note that this list is effective: in each case exists an example.
Under the stronger assumption that $\mathcal{E}$ is globally generated,
Ballico-Huh-Malaspina gave a 
classification of $\mathcal{E}$
with $c_1=(2,2)$
in \cite{MR3319928}.
Note that the vector bundles in Case (11), (12) or (13) are nef but not globally generated.
In fact, the cokernel $\Coker(\ev)$ of the evaluation map 
$\ev:H^0(\mathcal{E})\otimes \mathcal{O}\to \mathcal{E}$
is the structure sheaf of point $p$ in Case (11) 
and of the entire space $\mathbb{Q}^2$
in Case (12),
and, in Case (13),
a torsion-free sheaf of rank one on a divisor $E$ of type $(1,1)$
(see Remark~\ref{support(2,2)}).

Note that if $\mathcal{E}$ is the same as in Theorem~\ref{Chern(2,2)}
then the canonical bundle of the projective space bundle $\mathbb{P}(\mathcal{E})$
is nef.
Furthermore $\mathcal{O}_{\mathbb{P}(\mathcal{E})}(1)^{r+1}=8-c_2$,
where $c_2$ 
is the second Chern class of $\mathcal{E}$.
Hence $\mathbb{P}(\mathcal{E})$ is a weak Fano manifold, 
if $c_2<8$,
namely except for Cases (10), (12) and (13).

Our basic strategy and framework for describing 
$\mathcal{E}$
in Theorem~\ref{Chern(2,2)} 
is to provide a minimal locally free resolution 
of $\mathcal{E}$
in terms of some twists of the full strong exceptional collection 
\[(\mathcal{O},\mathcal{O}(1,0),\mathcal{O}(0,1),\mathcal{O}(1,1))\] 
of line bundles (see \cite{MR4453350} for further details).
In Cases (3), (4), (6), (7), (9) and (13) of Theorem~\ref{Chern(2,2)},
we describe $\mathcal{E}$ in this framework; 
however we do not 
describe every single $\mathcal{E}$ in this framework
in order 
to avoid unnecessarily messy 
descriptions.
For example, we do not describe $\mathcal{O}(2,2)$ in Case~(1)
of Theorem~\ref{Chern(2,2)} in terms of the 
aforementioned 
exceptional collection.
Instead, we 
make the description neat.

We also classify nef vector bundles 
with 
a near-to-maximal degree line bundle as a subsheaf:

\begin{prop}\label{HalfMaximal}
Let $\mathcal{E}$ be a nef vector bundle of rank $r$ on 
a smooth quadric surface $\mathbb{Q}^2$
with first Chern class $c_1=(c_1',c_1'')$.
Suppose that $\Hom (\mathcal{O}(c_1',b),\mathcal{E})\neq 0$
and $\Hom (\mathcal{O}(c_1',b+1),\mathcal{E})=0$ for some integer $b\leq c_1''$.
Then $\mathcal{E}$ fits in the following exact sequence:
\[0\to \mathcal{O}(c_1',b)\to \mathcal{E}\to \pi_2^*\mathcal{F}_2\to 0,\]
where $\pi_2:\mathcal{Q}\to \mathbb{P}^1$ denotes the second projection
and $\mathcal{F}_2$ is a nef vector bundle on $\mathbb{P}^1$ with degree $c_1''-b$. 
In particular, if $b=c_1''$, then 
$\mathcal{E}\cong \mathcal{O}(c_1',c_1'')\oplus \mathcal{O}^{\oplus r-1}$,
and if $b\leq c_1''-1$, then $\mathcal{E}$ fits in the following exact sequence:
\[0\to \mathcal{O}^{\oplus c_1''-b-1}\to 
\mathcal{O}(c_1',b)\oplus \mathcal{O}(0,1)^{\oplus c_1''-b}
\oplus \mathcal{O}^{\oplus r-2}
\to \mathcal{E}\to 0.\]
\end{prop}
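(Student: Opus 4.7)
The plan is to construct the claimed short exact sequence in three stages: use a nonzero morphism $\phi\colon\mathcal{O}(c_1',b)\to\mathcal{E}$ to produce a saturated sub-line-bundle, identify the quotient as a pullback $\pi_2^*\mathcal{F}_2$, and then specialize to the two cases.

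For the saturation, note $\phi$ is injective because $\mathcal{E}$ is torsion-free; let $\mathcal{L}\subset\mathcal{E}$ denote its saturation, which is a line bundle of the form $\mathcal{O}(c_1'+\alpha,b+\beta)$ with $\alpha,\beta\geq 0$ (forced by the inclusion $\mathcal{O}(c_1',b)\subset\mathcal{L}$). The hypothesis $\Hom(\mathcal{O}(c_1',b+1),\mathcal{E})=0$ excludes $\beta\geq 1$. Restricting $\mathcal{L}\hookrightarrow\mathcal{E}$ to a fiber $F$ of $\pi_2$ (injectivity is preserved because $\mathcal{E}/\mathcal{L}$ is torsion-free, killing the relevant $\mathcal{T}or$) then forces $\alpha=0$, since $\mathcal{E}|_F$ is a nef bundle on $\mathbb{P}^1$ of total degree $c_1'$, so no sub-line-bundle has degree above $c_1'$. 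Hence $\mathcal{L}=\mathcal{O}(c_1',b)$ and we obtain $0\to\mathcal{O}(c_1',b)\to\mathcal{E}\to\mathcal{Q}\to 0$ with $\mathcal{Q}$ torsion-free. To identify $\mathcal{Q}$, I would restrict this sequence to an arbitrary $\pi_2$-fiber $F$: since $\mathcal{E}|_F$ must split as $\mathcal{O}(c_1')\oplus\mathcal{O}^{\oplus r-1}$ and the inclusion of $\mathcal{O}(c_1')$ is automatically a split embedding (as $\Hom(\mathcal{O}(c_1'),\mathcal{O})=0$ when $c_1'\geq 1$, with $c_1'=0$ being trivial), we get $\mathcal{Q}|_F\cong\mathcal{O}^{\oplus r-1}$ on every $\pi_2$-fiber. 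Consequently $\mathcal{Q}$ is locally free and trivial along the fibers of $\pi_2$, so cohomology and base change gives $\mathcal{Q}\cong\pi_2^*\mathcal{F}_2$ where $\mathcal{F}_2:=\pi_{2*}\mathcal{Q}$ is a rank-$(r-1)$ vector bundle on $\mathbb{P}^1$ of degree $c_1''-b$; it is nef because $\mathcal{Q}$ (a quotient of the nef $\mathcal{E}$) is nef and $\pi_2$ is surjective.

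It remains to specialize. If $b=c_1''$, then $\mathcal{F}_2$ is nef of degree zero, hence isomorphic to $\mathcal{O}^{\oplus r-1}$, and the extension splits since $H^1(\mathcal{O}(c_1',c_1''))=0$ by K\"unneth (using $c_1',c_1''\geq 0$ from nefness). If $b\leq c_1''-1$, set $d:=c_1''-b\geq 1$; summing the standard resolutions $0\to\mathcal{O}^{\oplus d_i-1}\to\mathcal{O}(1)^{\oplus d_i}\to\mathcal{O}(d_i)\to 0$ of the positive-degree summands of $\mathcal{F}_2=\bigoplus\mathcal{O}(d_i)$ and stabilizing with trivial pieces yields, independently of the splitting type, a resolution
\[
0\to\mathcal{O}^{\oplus d-1}\to\mathcal{O}(1)^{\oplus d}\oplus\mathcal{O}^{\oplus r-2}\to\mathcal{F}_2\to 0
\]
on $\mathbb{P}^1$. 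Pulling back by $\pi_2$ and splicing with $0\to\mathcal{O}(c_1',b)\to\mathcal{E}\to\mathcal{Q}\to 0$ through the fiber product $\mathcal{G}=\mathcal{E}\times_{\mathcal{Q}}\bigl(\mathcal{O}(0,1)^{\oplus d}\oplus\mathcal{O}^{\oplus r-2}\bigr)$ produces the stated resolution once the induced extension of $\mathcal{O}(0,1)^{\oplus d}\oplus\mathcal{O}^{\oplus r-2}$ by $\mathcal{O}(c_1',b)$ is shown to be trivial; this reduces via K\"unneth to $H^1(\mathcal{O}(c_1',b-1))=H^1(\mathcal{O}(c_1',b))=0$, which holds once $b\geq 0$. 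I expect the splicing and the splitting-type-independent presentation of $\mathcal{F}_2$ to be the main technical steps, while the saturation and $\pi_2$-pullback identifications are essentially forced by the nef and torsion-freeness constraints together with the auxiliary $\Hom$-vanishing.
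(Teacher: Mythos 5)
Your route to the pivotal exact sequence $0\to\mathcal{O}(c_1',b)\to\mathcal{E}\to\pi_2^*\mathcal{F}_2\to 0$ is genuinely different from the paper's. The paper notes that nefness forces $\Hom(\mathcal{O}(c_1'+1,b),\mathcal{E})=0$ in addition to the assumed vanishing of $\Hom(\mathcal{O}(c_1',b+1),\mathcal{E})$, so the set-up of Section~\ref{Set-up} applies with $a=c_1'$, and the inequality \eqref{interesting} collapses to $0\leq c_2(\mathcal{F}^{\vee\vee})+\length\mathcal{Q}=c_2(\mathcal{F})\leq 2(c_1'-c_1')(c_1''-b)=0$; hence $\mathcal{Q}=0$ and $\mathcal{F}$ is a nef bundle with $c_1=(0,c_1''-b)$ and $c_2=0$, which is necessarily a pullback $\pi_2^*\mathcal{F}_2$. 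Your saturation argument, the fiberwise splitting of $\mathcal{E}\vert_{L_1}$, and cohomology-and-base-change accomplish the same identification without the Chern-class bookkeeping of Lemma~\ref{useful}; this is more self-contained (no appeal to \cite[Lemmas 5.4 and 10.1]{MR4453350} or \cite[Lemma 9.1]{MR4148799}) at the cost of length. For the final resolution the paper simply quotes \cite[Lemma 1.1]{MR4052950}, whereas you rebuild it by resolving $\mathcal{F}_2$ on $\mathbb{P}^1$ in a form independent of its splitting type and splicing; that construction is correct.

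The one genuine gap is the inequality $b\geq 0$, which you need for $H^1(\mathcal{O}(c_1',b-1))=0$ in the last splitting step and which you leave as a proviso (``holds once $b\geq 0$''). It does not follow from the stated hypotheses when $c_1'>0$: the rank-two bundle $\mathcal{S}$ defined by $0\to\mathcal{O}(-1,-1)\to\mathcal{O}^{\oplus 3}\to\mathcal{S}\to 0$ is globally generated, hence nef, with $c_1=(1,1)$, and K\"unneth gives $\Hom(\mathcal{O}(1,-1),\mathcal{S})\cong H^1(\mathcal{O}(-2,0))\cong K$ while $\Hom(\mathcal{O}(1,0),\mathcal{S})=0$, so the hypotheses hold with $(c_1',b)=(1,-1)$. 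The first exact sequence then reads $0\to\mathcal{O}(1,-1)\to\mathcal{S}\to\mathcal{O}(0,2)\to 0$ (consistent with both arguments), but the final resolution would read $0\to\mathcal{O}\to\mathcal{O}(1,-1)\oplus\mathcal{O}(0,1)^{\oplus 2}\to\mathcal{S}\to 0$, and since $H^0(\mathcal{O}(1,-1))=0$ the component $\mathcal{O}\to\mathcal{O}(1,-1)$ vanishes, forcing $\mathcal{O}(1,-1)$ to split off $\mathcal{S}$ --- contradicting nefness. So the last assertion of the Proposition genuinely requires $b\geq 0$ (when $c_1'=0$ one can deduce this from $\Hom(\mathcal{O}(0,b+1),\mathcal{E})=0$, but not when $c_1'>0$); every application in the paper has $b\geq 0$, so you should either record that restriction or restrict the final display to that case. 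Everything up to the identification of the quotient as $\pi_2^*\mathcal{F}_2$, and the case $b=c_1''$, is unaffected.
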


\begin{thm}\label{nearToMaximal}
Let $\mathcal{E}$ be a nef vector bundle of rank $r$
on a smooth quadric surface $\mathbb{Q}^2$
with first Chern class $
(c_1',c_1'')$.
Suppose that 
$\Hom (\mathcal{O}(c_1'-1,c_1''),\mathcal{E})
=\Hom (\mathcal{O}(c_1',c_1''-1),\mathcal{E})=0$
and that $\Hom (\mathcal{O}(c_1'-1,c_1''-1),\mathcal{E})\neq 0$.
Then $\mathcal{E}$ satisfies one of the following:
\begin{enumerate}
\item[$(1)$] $\mathcal{E}\cong \mathcal{O}(c_1'-1,c_1''-1)\oplus\mathcal{O}(1,1)\oplus
\mathcal{O}^{\oplus r-2}$;
\item[$(2)$]
$0\to \mathcal{O}\to 
\mathcal{O}(c_1'-1,c_1''-1)\oplus 
\mathcal{O}(1,0)\oplus \mathcal{O}(0,1)\oplus \mathcal{O}^{\oplus r-2}
\to \mathcal{E}\to 0$;
\item[$(3)$] $0\to \mathcal{O}(-1,-1)\to 
\mathcal{O}(c_1'-1,c_1''-1)\oplus 
\mathcal{O}^{\oplus r}\to \mathcal{E}\to 0$.
\end{enumerate}
\end{thm}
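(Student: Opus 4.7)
The plan is to produce a short exact sequence $0\to\mathcal{O}(c_1'-1,c_1''-1)\to\mathcal{E}\to\mathcal{F}\to 0$ with $\mathcal{F}$ a nef vector bundle of rank $r-1$ and first Chern class $(1,1)$, and then to apply the classification of such $\mathcal{F}$ supplied by \cite[Theorem~7.1]{MR4453350}.

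A nonzero morphism $\varphi\colon\mathcal{O}(c_1'-1,c_1''-1)\to\mathcal{E}$ is automatically injective because $\mathcal{E}$ is torsion-free. I would first argue that $\im\varphi$ is already saturated in $\mathcal{E}$: its saturation is a rank-one reflexive subsheaf, hence a line bundle $\mathcal{O}(a,b)$ with $a\geq c_1'-1$ and $b\geq c_1''-1$, and a strict inequality in either coordinate would yield a nonzero element of $\Hom(\mathcal{O}(c_1',c_1''-1),\mathcal{E})$ or $\Hom(\mathcal{O}(c_1'-1,c_1''),\mathcal{E})$, which the hypotheses exclude. Hence $\mathcal{F}:=\mathcal{E}/\im\varphi$ is torsion-free, and $\varphi$ regarded as a section of $\mathcal{E}(-c_1'+1,-c_1''+1)$ must be nowhere vanishing (otherwise $\mathcal{F}$ would acquire torsion at a zero), so $\mathcal{F}$ is locally free of rank $r-1$ and is nef as a quotient of the nef bundle $\mathcal{E}$. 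Restricting $\mathcal{E}$ to a line in either ruling forces $c_1',c_1''\geq 1$ (else $\mathcal{O}(-1)$ would embed into the trivial bundle on $\mathbb{P}^1$); combined with K\"unneth, this gives $H^1(\mathbb{Q}^2,\mathcal{O}(a,b))=0$ whenever $a,b\geq -1$.

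By \cite[Theorem~7.1]{MR4453350}, $\mathcal{F}$ is isomorphic to $\mathcal{O}(1,1)\oplus\mathcal{O}^{r-2}$, or it fits into either $0\to\mathcal{O}\to\mathcal{O}(1,0)\oplus\mathcal{O}(0,1)\oplus\mathcal{O}^{r-2}\to\mathcal{F}\to 0$ or $0\to\mathcal{O}(-1,-1)\to\mathcal{O}^r\to\mathcal{F}\to 0$. In the first case the main extension splits by the K\"unneth vanishing, giving conclusion~(1). In each of the other two cases, write the presentation of $\mathcal{F}$ as $0\to\mathcal{K}\to\mathcal{M}\to\mathcal{F}\to 0$ and form the fiber product $\mathcal{E}':=\mathcal{E}\times_\mathcal{F}\mathcal{M}$, which sits in
\[
0\to\mathcal{O}(c_1'-1,c_1''-1)\to\mathcal{E}'\to\mathcal{M}\to 0,\qquad 0\to\mathcal{K}\to\mathcal{E}'\to\mathcal{E}\to 0.
\]
Since $\Ext^1(\mathcal{M},\mathcal{O}(c_1'-1,c_1''-1))$ decomposes as a direct sum of groups $H^1(\mathbb{Q}^2,\mathcal{O}(a,b))$ with $a,b\geq -1$, it vanishes, so the first sequence splits; substituting $\mathcal{E}'\cong\mathcal{O}(c_1'-1,c_1''-1)\oplus\mathcal{M}$ into the second sequence yields exactly presentation~(2) or (3). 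The main delicate point is establishing the saturation/local-freeness of $\mathcal{F}$; everything else reduces to bookkeeping with Ext groups on $\mathbb{P}^1\times\mathbb{P}^1$ and the known classification for $c_1=(1,1)$.
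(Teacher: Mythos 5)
Your reduction has a genuine gap at the step where you pass from ``$\mathcal{F}$ is torsion-free'' to ``$\mathcal{F}$ is locally free.'' The saturation argument correctly shows that the zero locus of the section $s\in H^0(\mathcal{E}(-c_1'+1,-c_1''+1))$ contains no divisor, and hence that $\mathcal{F}$ is torsion-free. But a section of a bundle of rank $\geq 2$ whose zero locus is a nonempty finite set of points does \emph{not} produce torsion in the quotient: the quotient is torsion-free yet fails to be locally free at those points. So ``otherwise $\mathcal{F}$ would acquire torsion at a zero'' is false, and nothing in the hypotheses rules out isolated zeros. Indeed this actually happens in instances of conclusion (2) of the theorem: there the quotient of $\mathcal{E}$ by $\mathcal{O}(c_1'-1,c_1''-1)$ can be $\mathcal{I}_p(1,1)\oplus\mathcal{O}^{\oplus r-2}$, which is not locally free. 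Consequently you cannot apply the classification of nef vector bundles with $c_1=(1,1)$ directly to $\mathcal{F}$. The paper applies it instead to the double dual $\mathcal{F}^{\vee\vee}$ (which is a nef vector bundle by a cited lemma), and the real work is controlling the zero-dimensional sheaf $\mathcal{Q}=\mathcal{F}^{\vee\vee}/\mathcal{F}$ via the bound $c_2(\mathcal{F}^{\vee\vee})+\length\mathcal{Q}=c_2(\mathcal{F})\leq 2$: the subcase $c_2(\mathcal{F}^{\vee\vee})=1$, $\length\mathcal{Q}=1$ must be excluded by exhibiting a negative-degree quotient on a line (Lemma~\ref{(1,0)sum(0,1)}), and the subcase $\length\mathcal{Q}=2$ requires showing $Z$ is a complete intersection of two $(1,1)$-divisors (Lemma~\ref{length2Z}). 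None of this is reachable from your setup.

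Two smaller points. First, your parenthetical justification that $c_1',c_1''\geq 1$ (``else $\mathcal{O}(-1)$ would embed into the trivial bundle on $\mathbb{P}^1$'') proves nothing, since $\mathcal{O}_{\mathbb{P}^1}(-1)$ does embed into $\mathcal{O}_{\mathbb{P}^1}^{\oplus 2}$; the conclusion can be rescued (if $c_1'=0$ then $\mathcal{E}=\pi_2^*\mathcal{F}_2$ and $\Hom(\mathcal{O}(-1,c_1''-1),\mathcal{E})\neq 0$ would force $\Hom(\mathcal{O}(0,c_1''-1),\mathcal{E})\neq 0$), but it needs an argument. Second, the fiber-product/Ext-vanishing bookkeeping at the end is fine as far as it goes and matches what the paper does implicitly once the correct presentation of $\mathcal{F}$ is in hand; the missing content is entirely in establishing that presentation when $\mathcal{F}$ is not reflexive.
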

Proposition~\ref{HalfMaximal} is a generalization of \cite[Lemma 1.2]{MR4052950},
and 
Theorem~\ref{nearToMaximal} is one of 
\cite[Theorem~7.1]{MR4453350}.
Proposition~\ref{HalfMaximal} and Theorem~\ref{nearToMaximal} will be 
used in the proof
of Theorem~\ref{Chern(2,2)}.

The content of this paper is as follows.
In Section~\ref{Preliminaries}, we briefly recall 
Bondal's theorem \cite[Theorem 6.2]{MR992977}
and its related notions
and results
used in Section~\ref{prooflatter}.
In particular we recall some finite-dimensional algebra $A$,
and fix symbols,
e.g., $G$ and $S_i$, 
related to $A$ and 
finitely generated right $A$-modules. 
In Section~\ref{lemmata},
we collect some lemmas required in Sections~\ref{neartomaximal},
\ref{proof}, and \ref{prooflatter}.
In Section~\ref{neartomaximal}, we prove Proposition~\ref{HalfMaximal} and Theorem~\ref{nearToMaximal}.
In Sections~\ref{proof} and \ref{prooflatter},
we apply the Bondal spectral sequence \cite[Theorem 1]{MR3275418}.
In Section~\ref{proof},
we prove  Theorem~\ref{Chern(2,2)} 
when $\mathcal{E}$ contains $\mathcal{O}(D)$ as a subsheaf,
where $D$ is a nonzero effective divisor.
In Section~\ref{prooflatter},
we prove Theorem~\ref{Chern(2,2)} in the remaining case.
More precisely,
in Subsection~\ref{h^1=0}, we first complete the classification 
assuming that 
$h^1(\mathcal{E})=0$,
and, in Subsection~\ref{h^1>0}, we reduce 
the case 
$h^1(\mathcal{E})>0$ 
to 
the 
case $h^1(\mathcal{E})=0$ 
by extending $H^1(\mathcal{E})\otimes \mathcal{O}$ by $\mathcal{E}$.

\subsection{Notation and conventions}\label{convention}
Throughout this paper,
we work over an algebraically closed field $K$ of characteristic zero.
Basically we follow the standard notation and terminology in algebraic
geometry. 
We denote 
by $\mathbb{Q}^2$ a smooth quadric surface over $K$,
by 
\begin{itemize}
\item $L_1$ a fiber $\mathbb{P}^1\times \{\textrm{pt}\}$ of the second projection
$\pi_2:\mathbb{P}^1\times \mathbb{P}^1\to \mathbb{P}^1$,
\end{itemize}
and by 
\begin{itemize}
\item $L_2$ a fiber $\{\textrm{pt}\}\times \mathbb{P}^1$ of the first projection
$\pi_1:\mathbb{P}^1\times \mathbb{P}^1\to \mathbb{P}^1$.
\end{itemize}
For a coherent sheaf $\mathcal{F}$,
we denote by $c_i(\mathcal{F})$ the $i$-th Chern class of $\mathcal{F}$
and by $\mathcal{F}^{\vee\vee}$ the double dual of $\mathcal{F}$.
In particular, 
\begin{itemize}
\item $c_i$ stands for $c_i(\mathcal{E})$
of the nef vector bundle $\mathcal{E}$ we are dealing with.
\end{itemize}
Since $\Pic \mathbb{Q}^2\cong \mathbb{Z}^{\oplus 2}$, we identify 
the first Chern class 
$c_1$ with 
its corresponding pair of integers:
$c_1=(c_1',c_1'')$.
For any closed subscheme $Z$
in $\mathbb{Q}^2$, 
$\mathcal{I}_Z$
denotes
the ideal sheaf of $Z$ in $\mathbb{Q}^2$;
for a point $p\in \mathbb{Q}^2$, $\mathcal{I}_p$ denotes
the ideal sheaf of 
$p\in \mathbb{Q}^2$
and $k(p)$ denotes the residue field of $p\in \mathbb{Q}^2$.
Finally we refer to \cite{MR2095472} for the definition
and basic properties of nef vector bundles.

\section{
Preliminaries
}\label{Preliminaries}
Throughout this paper, 
$G_0$, $G_1$, $G_2$, $G_3$ denote
$\mathcal{O}$, $\mathcal{O}(1,0)$, $\mathcal{O}(0,1)$, $\mathcal{O}(1,1)$,
respectively,
on $\mathbb{Q}^2$.
An important property of the 
collection 
$(G_0,G_1,G_2,G_3)$
is that it is a full strong exceptional collection in 
$D^b(\mathbb{Q}^2)$,
where $D^b(\mathbb{Q}^2)$ denotes the bounded derived category
of (the abelian category of) coherent sheaves on $\mathbb{Q}^2$.
Here we use the term ``collection'' to mean ``family'', not ``set''.
Thus an exceptional collection is also called an exceptional sequence. 
We refer to \cite{MR2244106} for the definition of a full strong exceptional sequence.

Denote by $G$ the direct sum $\bigoplus_{i=0}^3G_i$ of $G_0$, $G_1$, $G_2$, and $G_3$,
and by $A$ the endomorphism ring $\End(G)$ of $G$.
Then $A$ is a finite-dimensional $K$-algebra,
and $G$ is a left $A$-module.
Note that $\Ext^q(G, \mathcal{F})$ is a finitely generated 
right $A$-module for a coherent sheaf $\mathcal{F}$ 
on $\mathbb{Q}^2$.
We denote by $\module A$ the category of finitely generated right $A$-modules
and by $D^b(\module A)$ the bounded derived category
of $\module A$.
Let $p_i:G\to G_i$ be the projection,
and $\iota_i:G_i\hookrightarrow G$ the inclusion.
Set $e_i=\iota_i\circ p_i$ in $A$.
Any finitely generated right $A$-module $V$
has an ascending filtration 
\[0=V^{\leq -1}\subset V^{\leq 0}\subset V^{\leq 1}\subset 
V^{\leq 2}
\subset V^{\leq 3}=V\]
by right $A$-submodules,
where $V^{\leq i}$ is defined to be $\bigoplus_{j\leq i}Ve_j$.
Set $\Gr^iV=V^{\leq i}/V^{\leq i-1}$.
Let $S_i$ denote the right $A$-module 
such that $\Gr^iS_i=K$
and $\Gr^jS_i=0$ for any $j\neq i$.

It follows from Bondal's theorem \cite[Theorem 6.2]{MR992977}
that 
\[\RHom(G,\bullet):D^b(\mathbb{Q}^2)\to D^b (\module A)\]
is an exact equivalence,
and its quasi-inverse is 
\[\bullet\lotimes_AG:D^b(\module A)\to D^b(\mathbb{Q}^2).\]
Since 
$\RHom(G,\mathcal{O}(-1,-1)[2])\cong S_3$,
we obtain
\[S_3\lotimes_AG\cong \mathcal{O}(-1,-1)[2].\]
Since 
$\RHom(G,\mathcal{O}(0,-1)[1])\cong S_2$,
we have 
\[S_2\lotimes_AG\cong \mathcal{O}(0,-1)[1].\]
Similarly we have 
\[S_1\lotimes_AG\cong \mathcal{O}(-1,0)[1]\textrm{ and }
S_0\lotimes_AG\cong \mathcal{O}.\]
We will use these facts 
freely 
in this paper.
We will also apply the Bondal spectral sequence \cite[Theorem 1]{MR3275418}
to a coherent sheaf $\mathcal{F}$ on $\mathbb{Q}^2$
\begin{equation}\label{BondalSpectral}
E_2^{p,q}=\caltor_{-p}^A(\Ext^q(G,\mathcal{F}),G)
\Rightarrow
E^{p+q}=
\begin{cases}
\mathcal{F}& \textrm{if}\quad  p+q= 0\\
0& \textrm{if}\quad  p+q\neq 0.
\end{cases}
\end{equation}

Recall the following theorem~\cite[Theorem~0.1]{MR4052950},
which will be applied in Section~\ref{proof}.
\begin{thm}\label{Chern(2,1)}
Let $\mathcal{E}$ be a nef vector bundle of rank $r$
on a smooth quadric surface $\mathbb{Q}^2$ over an algebraically closed field $K$ 
of characteristic zero.
If $\det\mathcal{E}\cong \mathcal{O}(2,1)$,
then 
$\mathcal{E}$ is isomorphic to one of the following vector bundles
or fits in one of the following exact sequences:
\begin{enumerate}
\item[$(1)$]
$\mathcal{O}(2,1)\oplus \mathcal{O}^{\oplus r-1}$;
\item[$(2)$] 
$\mathcal{O}(1,1)\oplus
\mathcal{O}(1,0)\oplus  \mathcal{O}^{\oplus r-2}$;
\item[$(3)$] 
$0\to \mathcal{O}
\to 
\mathcal{O}(1,0)^{\oplus 2}\oplus  
\mathcal{O}(0,1)\oplus  
\mathcal{O}^{\oplus r-2}\to \mathcal{E}\to 0$;\\
\item[$(4)$] $0\to \mathcal{O}(-1,-1)
\oplus \mathcal{O}(-1,0)
\to \mathcal{O}^{\oplus r+2}\to \mathcal{E}\to 0$;
\item[$(5)$] $0\to \mathcal{O}(-2,-1)
\to \mathcal{O}^{\oplus r+1}\to \mathcal{E}\to 0$.
\end{enumerate}
\end{thm}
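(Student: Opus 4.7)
The plan is to follow the exceptional-collection framework already used for $c_1=(2,2)$ and specialize it to $\det\mathcal{E}=\mathcal{O}(2,1)$: pass $\mathcal{E}$ through Bondal's equivalence $\RHom(G,-):D^b(\mathbb{Q}^2)\to D^b(\module A)$ and recover a locally free resolution of $\mathcal{E}$ by plugging $\bigoplus_q\Ext^q(G,\mathcal{E})$ into the Bondal spectral sequence~(\ref{BondalSpectral}). The argument splits naturally according to which line bundles $\mathcal{O}(a,b)$ with $(0,0)<(a,b)\leq(2,1)$ embed into $\mathcal{E}$, and the five resulting cases turn out to correspond to the five possible values of $c_2\in\{0,1,2,3,4\}$.

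I would first dispose of the cases in which $\mathcal{E}$ contains a reasonably large line bundle as a subsheaf. Applying Proposition~\ref{HalfMaximal} with $(c_1',c_1'')=(2,1)$: if $\Hom(\mathcal{O}(2,1),\mathcal{E})\neq 0$ we obtain case~(1) directly, while if $\Hom(\mathcal{O}(2,1),\mathcal{E})=0$ but $\Hom(\mathcal{O}(2,0),\mathcal{E})\neq 0$ the proposition's explicit form produces case~(3); the analog of Proposition~\ref{HalfMaximal} obtained by swapping the two rulings covers $\Hom(\mathcal{O}(a,1),\mathcal{E})\neq 0$ with $a\leq 1$. Theorem~\ref{nearToMaximal} applied to $(c_1',c_1'')=(2,1)$ treats the near-maximal regime $\Hom(\mathcal{O}(2,0),\mathcal{E})=\Hom(\mathcal{O}(1,1),\mathcal{E})=0$ and $\Hom(\mathcal{O}(1,0),\mathcal{E})\neq 0$, and its three conclusions feed, after splicing with the standard resolution $0\to\mathcal{O}(-1,0)\to\mathcal{O}^{\oplus 2}\to\mathcal{O}(1,0)\to 0$ where necessary, into cases~(2), (3) and (4); a symmetric argument handles $\Hom(\mathcal{O}(0,1),\mathcal{E})\neq 0$.

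The only remaining case is the deep regime in which $\Hom(\mathcal{O}(a,b),\mathcal{E})=0$ for every $(0,0)<(a,b)\leq(2,1)$. Here nefness of $\mathcal{E}$ together with $K_{\mathbb{Q}^2}=\mathcal{O}(-2,-2)$ forces, via Serre duality and the fact that any line-bundle quotient of a nef bundle on $\mathbb{Q}^2$ has nonnegative bidegree, the vanishings $H^2(\mathcal{E}(-a,-b))=0$ for all $(a,b)\in\{0,1\}^2$, so $\Ext^2(G_i,\mathcal{E})=0$ for each $i$. Riemann--Roch on $\mathbb{Q}^2$ then pins down the $\Ext^1(G_i,\mathcal{E})$ in terms of $c_2$; combining the deep-regime vanishings with the standard nef bound $c_2\leq c_1^2=4$ forces $c_2=4$, and the resulting $A$-module $\bigoplus_q\Ext^q(G,\mathcal{E})$, under $\bullet\lotimes_A G$, collapses~(\ref{BondalSpectral}) to a two-term complex that yields exactly the resolution of case~(5).

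The main obstacle I anticipate is in two places. First, in the deep regime $h^1(\mathcal{E})>0$ is not a priori excluded, and I would follow the $(2,2)$ strategy of reducing to $h^1(\mathcal{E})=0$ by forming an extension of $H^1(\mathcal{E})\otimes\mathcal{O}$ by $\mathcal{E}$ and analyzing the resulting nef bundle. Second, one must verify that the resolution extracted from the spectral sequence is minimal and has the claimed shape, i.e., that the map $\mathcal{O}(-2,-1)\to\mathcal{O}^{\oplus r+1}$ is genuinely injective with torsion-free nef cokernel and that no redundant direct summand hides in $\bigoplus_q\Ext^q(G,\mathcal{E})$. The nef hypothesis is used most delicately here, in excluding spurious Chern-class configurations by forcing the relevant $\Ext^1$-groups between the candidate summands to vanish.
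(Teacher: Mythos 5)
Theorem~\ref{Chern(2,1)} is not proved in this paper---it is recalled from the author's earlier work \cite{MR4052950}---but your proposal follows essentially the same strategy used both there and in the present paper's proof of Theorem~\ref{Chern(2,2)}: dispose of the cases where some $\mathcal{O}(a,b)$ with $(a,b)>(0,0)$ embeds via Proposition~\ref{HalfMaximal} (and its swap) together with Theorem~\ref{nearToMaximal}, then in the remaining regime use the vanishing of $\Hom(\mathcal{O}(1,0),\mathcal{E})$ and $\Hom(\mathcal{O}(0,1),\mathcal{E})$ plus Riemann--Roch to force $c_2=4$ and collapse the Bondal spectral sequence~(\ref{BondalSpectral}) to the resolution of case~(5). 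The outline is correct, and the two caveats you flag (the a priori possibility $h^1(\mathcal{E})>0$, and the surjectivity/minimality of the map coming from the $E_2$-page) are resolved exactly as in Subsections~\ref{h^1=0} and~\ref{h^1>0}, the only slip being your closing remark that the auxiliary $\Ext^1$-vanishings between summands rely on nefness---they are just cohomology vanishings of line bundles on $\mathbb{Q}^2$.
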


\section{Lemmas}\label{lemmata}
\begin{lemma}\label{useful}
Let $\mathcal{E}$ be a nef vector bundle on a smooth complete surface $S$,
and $\mathcal{L}$ a line bundle on $S$.
Suppose that there exists a non-zero section 
$s\in H^0(\mathcal{E}\otimes\mathcal{L}^{-1})$ whose zero locus $(s)_0$ 
has dimension $\leq 0$.
Let $\mathcal{F}$ be the cokernel of the injection 
$\mathcal{L}\hookrightarrow \mathcal{E}$
induced by $s$,
$\mathcal{F}^{\vee\vee}$ the double dual of $\mathcal{F}$,
and $\mathcal{Q}$  the cokernel of the natural morphism 
$\mathcal{F}\to \mathcal{F}^{\vee\vee}$.
Then 
we have 
\[
0\leq c_2(\mathcal{F}^{\vee\vee})+\length \mathcal{Q}=c_2(\mathcal{F})\leq 
c_1(\mathcal{F})^2.
\]
\end{lemma}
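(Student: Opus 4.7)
The plan is to combine a depth-theoretic argument, a direct Chern-class computation on short exact sequences, and a blow-up reduction that allows invoking the Demailly-Peternell-Schneider non-negativity inequalities for nef vector bundles.

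Since $S$ is smooth of dimension two and $(s)_0$ has codimension at least two, applying the depth inequality to $0\to\mathcal{L}\to\mathcal{E}\to\mathcal{F}\to 0$ gives $\mathrm{depth}\,\mathcal{F}\geq 1$ everywhere, so $\mathcal{F}$ is torsion-free and $\mathcal{F}^{\vee\vee}$ is reflexive, hence locally free on the smooth surface. From the resulting short exact sequence $0\to\mathcal{F}\to\mathcal{F}^{\vee\vee}\to\mathcal{Q}\to 0$, the sheaf $\mathcal{Q}$ is supported in dimension zero and of finite length; multiplicativity of the total Chern class then yields $c_1(\mathcal{F}^{\vee\vee})=c_1(\mathcal{F})$ and the middle equality $c_2(\mathcal{F})=c_2(\mathcal{F}^{\vee\vee})+\length\mathcal{Q}$.

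For the two inequalities, let $\pi:\tilde{S}\to S$ be the blow-up along the zero scheme $(s)_0$, with exceptional divisor $E$ satisfying $E^2=-\length(s)_0$ and $E\cdot\pi^*D=0$ for every divisor $D$ on $S$. A local computation shows $\pi^*s$ vanishes to order one along $E$, producing a saturated sub-line-bundle $\pi^*\mathcal{L}(E)\hookrightarrow\pi^*\mathcal{E}$ whose quotient $\tilde{\mathcal{F}}$ is locally free of rank $r-1$. Since $\pi^*\mathcal{E}$ is nef as the pullback of a nef bundle, $\tilde{\mathcal{F}}$ is nef; the Demailly-Peternell-Schneider inequalities then give $c_2(\tilde{\mathcal{F}})\geq 0$ and $c_1(\tilde{\mathcal{F}})^2\geq c_2(\tilde{\mathcal{F}})$. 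A direct computation on $\tilde{S}$ produces $c_1(\tilde{\mathcal{F}})^2=c_1(\mathcal{F})^2-\length(s)_0$ and $c_2(\tilde{\mathcal{F}})=c_2(\mathcal{F})-\length(s)_0$; combined with the identification $\length(s)_0=\length\mathcal{Q}$, these inequalities translate into $c_2(\mathcal{F}^{\vee\vee})\geq 0$ and $c_2(\mathcal{F})\leq c_1(\mathcal{F})^2$.

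The main obstacle is controlling the order of vanishing of $\pi^*s$ along $E$ and confirming $\length\mathcal{Q}=\length(s)_0$ when $(s)_0$ is non-reduced. Both issues reduce to a local computation after trivializing $\mathcal{L}$ and $\mathcal{E}$: in local coordinates $s$ becomes a tuple of regular functions generating the ideal of $(s)_0$, the blow-up is precisely the scheme-theoretic blow-up of this ideal, and on each standard affine chart one recognizes the required factorization $\pi^*s=\sigma_E\cdot s'$ with $s'$ nowhere vanishing and the claimed length identity directly.
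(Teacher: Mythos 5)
Your first paragraph reproduces the paper's argument for the easy parts: torsion-freeness of $\mathcal{F}$ (the paper cites \cite[Lemma~5.4]{MR4453350} rather than running the depth argument) and the Whitney-sum computation giving $c_2(\mathcal{F})=c_2(\mathcal{F}^{\vee\vee})+\length\mathcal{Q}$. The blow-up argument for the two inequalities, however, contains a genuine error. The exceptional divisor $E$ is the Cartier divisor with $\mathcal{O}(-E)=\mathcal{I}_{(s)_0}\cdot\mathcal{O}_{\tilde{S}}$, and $-E^2$ equals the Hilbert--Samuel multiplicity $e(\mathcal{I}_{(s)_0})$, which coincides with $\length(s)_0$ only when $(s)_0$ is a local complete intersection. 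Since $\rk\mathcal{E}$ may exceed $2$, this can fail within the hypotheses of the lemma: a section whose components are locally $x^2,xy,y^2$ has zero scheme of colength $3$ but multiplicity $4$, so $E^2=-4$ (and $\tilde{S}$ is then singular, so you would also need the Fulton--Lazarsfeld positivity inequalities on complete, possibly singular, surfaces rather than the smooth-surface statement). Consequently both displayed formulas $c_1(\tilde{\mathcal{F}})^2=c_1(\mathcal{F})^2-\length(s)_0$ and $c_2(\tilde{\mathcal{F}})=c_2(\mathcal{F})-\length(s)_0$ are wrong; the correct correction term is $e(\mathcal{I}_{(s)_0})$ in both. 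The upper bound $c_2(\mathcal{F})\leq c_1(\mathcal{F})^2$ survives because the term cancels, but your derivation of $c_2(\mathcal{F}^{\vee\vee})\geq 0$ does not: $c_2(\tilde{\mathcal{F}})\geq 0$ only yields $c_2(\mathcal{F})\geq e(\mathcal{I}_{(s)_0})$, and you would still need $e(\mathcal{I}_{(s)_0})\geq\length(s)_0$ (true, via a reduction of the ideal, but absent from your argument) together with the asserted-but-unproved identity $\length\mathcal{Q}=\length(s)_0$ (true, but it requires a local-duality computation, not a direct inspection of charts).

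The paper avoids all of this machinery: the lower bound $c_2(\mathcal{F}^{\vee\vee})\geq 0$ follows immediately from the fact that $\mathcal{F}^{\vee\vee}$ is itself a nef vector bundle (\cite[Lemma~9.1]{MR4148799}), and the upper bound from the inequality $c_2(\mathcal{E})\leq c_1(c_1-c_1(\mathcal{L}))$ of \cite[Lemma~10.1]{MR4453350} combined with $c_2(\mathcal{F})=c_2(\mathcal{E})-c_1(\mathcal{L})c_1(\mathcal{F})$. If you want a self-contained blow-up proof, either restrict to the case where $(s)_0$ is a local complete intersection or replace $\length(s)_0$ by the multiplicity throughout and supply the two missing comparisons.
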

\begin{proof}
First note that $\mathcal{F}$ is torsion-free
by \cite[Lemma~5.4]{MR4453350}, since $\dim (s)_0\leq 0$.
Hence the natural morphism 
$\mathcal{F}\to \mathcal{F}^{\vee\vee}$ is injective.
The exact sequence $0\to \mathcal{L}\to \mathcal{E}\to \mathcal{F}\to 0$
shows that 
\begin{equation}\label{c2fandc2}
c_2(\mathcal{F})+c_1(\mathcal{L})c_1(\mathcal{F})=c_2.
\end{equation}
Note here that  $Q$ is a zero-dimensional coherent sheaf
and that
\[\length Q=-c_2(\mathcal{Q}).\]
The 
exact sequence
$0\to \mathcal{F}\to \mathcal{F}^{\vee\vee}\to \mathcal{Q}\to 0$
then 
implies that 
\[c_2(\mathcal{F})=c_2(\mathcal{F}^{\vee\vee})+\length \mathcal{Q}.\]
Now recall that 
\[c_2\leq c_1(c_1-c_1(\mathcal{L}))\] 
by \cite[Lemma~10.1]{MR4453350}.
Since $c_1(\mathcal{F})=c_1-c_1(\mathcal{L})$,
this inequality together with (\ref{c2fandc2})
shows that $c_2(\mathcal{F})\leq 
c_1(\mathcal{F})^2$.
Finally note that $c_2(\mathcal{F}^{\vee\vee})\geq 0$,
since $\mathcal{F}^{\vee\vee}$ is a nef vector bundle
by \cite[Lemma 9.1]{MR4148799}.
\end{proof}

\begin{lemma}\label{length2Z}
Let $Z$ be a zero-dimensional closed subscheme of length two on $\mathbb{Q}^2$,
and suppose that $\length Z\cap L_1\leq 1$ and $\length Z\cap L_2\leq 1$.
Then $\mathcal{I}_Z$ fits in an exact sequence
\[0\to \mathcal{O}(-2,-2)\to \mathcal{O}(-1,-1)^{\oplus 2}\to \mathcal{I}_Z\to 0.\]
\end{lemma}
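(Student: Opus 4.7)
My plan is to construct the resolution as a Koszul complex on two linearly independent sections of $\mathcal{I}_Z(1,1)$. First I would establish that $h^0(\mathcal{I}_Z(1,1))=2$. From the short exact sequence $0\to \mathcal{I}_Z(1,1)\to \mathcal{O}(1,1)\to \mathcal{O}_Z\to 0$, together with $H^1(\mathcal{O}(1,1))=H^2(\mathcal{O}(1,1))=0$ and $H^1(\mathcal{O}_Z)=0$, it suffices to show that the restriction map $H^0(\mathcal{O}(1,1))\to H^0(\mathcal{O}_Z)$ is surjective. Since $\mathcal{O}(1,1)$ realizes the Segre embedding $\mathbb{Q}^2\hookrightarrow \mathbb{P}^3$, it is very ample, so any length-two subscheme spans a $\mathbb{P}^1$ in $\mathbb{P}^3$, whence $Z$ imposes two independent conditions on $|\mathcal{O}(1,1)|$. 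The Euler characteristic calculation then gives $h^0(\mathcal{I}_Z(1,1))=4-2=2$.

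Next, pick a basis $s_0,s_1$ of $H^0(\mathcal{I}_Z(1,1))$ and denote by $D_0,D_1$ the corresponding $(1,1)$-divisors containing $Z$. The central step is to show that $D_0$ and $D_1$ meet properly. Since any $(1,1)$-divisor is either irreducible or a sum $L_1'+L_2'$, and since $D_0,D_1$ are linearly independent, a common component would have to be a fiber $L$ of type $L_1$ or $L_2$. Suppose, say, that $L$ is a fiber of $\pi_2$ contained in both $D_0$ and $D_1$. Then $s_0,s_1\in H^0(\mathcal{I}_{L}(1,1))$, which is $2$-dimensional via the identification $\mathcal{I}_{L}(1,1)\cong \mathcal{O}(1,0)$, so $H^0(\mathcal{I}_Z(1,1))=H^0(\mathcal{I}_{L}(1,1))$. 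However, $\mathcal{O}(1,0)$ is globally generated, so the base locus of $|\mathcal{I}_{L}(1,1)|$ is exactly $L$; this forces $Z\subseteq L$, contradicting $\length(Z\cap L_1)\leq 1$. The case of an $L_2$-fiber is symmetric.

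With $s_0,s_1$ now a regular sequence, the associated Koszul complex yields the short exact sequence
\[0\to \mathcal{O}(-2,-2)\to \mathcal{O}(-1,-1)^{\oplus 2}\to \mathcal{I}_{D_0\cap D_1}\to 0.\]
Since $Z\subseteq D_0\cap D_1$ and both are zero-dimensional with $\length(D_0\cap D_1)=D_0\cdot D_1=2=\length Z$, the inclusion $Z\hookrightarrow D_0\cap D_1$ is an equality of subschemes, producing the desired resolution. I expect the second step --- ruling out that $D_0$ and $D_1$ share a positive-dimensional component --- to be the main obstacle, since that is precisely where the hypotheses $\length(Z\cap L_i)\leq 1$ are used.
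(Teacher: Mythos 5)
Your proof is correct and takes essentially the same route as the paper: the paper's own proof simply asserts that the hypotheses $\length Z\cap L_i\leq 1$ force $Z$ to be a complete intersection of two divisors of type $(1,1)$ and then reads off the Koszul resolution, and your argument supplies exactly the details behind that assertion (the count $h^0(\mathcal{I}_Z(1,1))=2$ and the exclusion of a common fiber component). The only cosmetic difference is that the paper speaks of two \emph{irreducible} $(1,1)$-divisors, whereas you only require the two divisors to share no component, which is all the Koszul complex needs.
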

\begin{proof}
The assumption 
that $\length Z\cap L_1\leq 1$ and $\length Z\cap L_2\leq 1$
shows that $Z$ is a complete intersection
of two irreducible 
divisors of type $(1,1)$.
Therefore we obtain the exact sequence above.
\end{proof}

\begin{lemma}\label{(1,0)sum(0,1)}
Let $\mathcal{G}$ and  $\mathcal{Q}$ be coherent sheaves on $\mathbb{Q}^2$,
and consider an exact sequence
\[0\to \mathcal{G}\to \mathcal{O}(a,0)^{\oplus e}\oplus\mathcal{O}(0,b)^{\oplus f}
\oplus \mathcal{O}^{\oplus r} 
\to \mathcal{Q}\to 0,\]
where $a$ and $b$ are positive integers, 
and $e$, $f$, $r$ are non-negative integers.
Suppose 
that $\mathcal{Q}$ is zero-dimensional
and 
that one of the following holds:
\begin{enumerate}
\item[$(1)$] $e=0$ or $f=0$;
\item[$(2)$] $e>0$, $f>0$, and $a=b=1$.
\end{enumerate}
Then $\mathcal{G}$ admits a negative degree quotient on a line.
\end{lemma}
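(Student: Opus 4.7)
My plan is a restriction-to-a-line argument for case (1), combined with a reduction of case (2) to case (1) via a projection.

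For case (1), I would assume without loss of generality that $f=0$ (the case $e=0$ is symmetric). Since $\mathcal{Q}$ is nonzero and zero-dimensional, I pick a closed point $p\in\mathrm{Supp}\,\mathcal{Q}$ and restrict the defining sequence to the line $L_2$ through $p$. Since $\mathcal{O}(a,0)|_{L_2}\cong\mathcal{O}_{L_2}$, the image $\mathcal{K}$ of $\mathcal{G}|_{L_2}$ in the trivial bundle $\mathcal{M}|_{L_2}\cong\mathcal{O}_{L_2}^{\oplus e+r}$ fits into
\[0\to\mathcal{K}\to\mathcal{O}_{L_2}^{\oplus e+r}\to\mathcal{Q}|_{L_2}\to 0\]
with $\mathcal{Q}|_{L_2}$ nonzero, so $\deg\mathcal{K}<0$. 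As a subsheaf of a trivial bundle on $\mathbb{P}^1$, $\mathcal{K}$ splits as a direct sum of line bundles of nonpositive degree, and the negativity of its total degree forces a summand $\mathcal{O}_{L_2}(-k)$ with $k\geq 1$. Composing $\mathcal{G}\twoheadrightarrow\mathcal{G}|_{L_2}\twoheadrightarrow\mathcal{K}\twoheadrightarrow\mathcal{O}_{L_2}(-k)$ then yields the desired quotient.

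For case (2), I would write the given surjection $\phi\colon\mathcal{M}\to\mathcal{Q}$ with components $\phi_{10},\phi_{01},\phi_0$ on the three summands. If $\phi_{10}$ fails to surject onto $\mathcal{Q}$, set $\mathcal{Q}':=\mathcal{Q}/\phi_{10}(\mathcal{O}(1,0)^{\oplus e})\neq 0$; the induced surjection $\bar\phi\colon\mathcal{O}(0,1)^{\oplus f}\oplus\mathcal{O}^{\oplus r}\twoheadrightarrow\mathcal{Q}'$ has kernel $\mathcal{G}'$, and the snake lemma applied to the commutative diagram produced by $0\to\mathcal{O}(1,0)^{\oplus e}\to\mathcal{M}\to\mathcal{M}/\mathcal{O}(1,0)^{\oplus e}\to 0$ together with $\phi$ and $\bar\phi$ yields a surjection $\mathcal{G}\twoheadrightarrow\mathcal{G}'$. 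Since $\mathcal{G}'$ satisfies the hypotheses of case (1) (with vanishing $\mathcal{O}(a,0)$-summand and nonzero zero-dimensional cokernel $\mathcal{Q}'$), the previous paragraph produces a negative degree quotient on $L_1$, giving one for $\mathcal{G}$. The symmetric argument handles the subcase where $\phi_{01}$ is not surjective.

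The main obstacle will be the remaining subcase of case (2) in which both $\phi_{10}$ and $\phi_{01}$ surject onto $\mathcal{Q}$. Here my plan is to exploit the rigidity from $a=b=1$: both $\mathcal{N}_1:=\ker\phi_{10}\subset\mathcal{O}(1,0)^{\oplus e}$ and $\mathcal{N}_2:=\ker\phi_{01}\subset\mathcal{O}(0,1)^{\oplus f}$ sit inside $\mathcal{G}$, and by case (1) applied internally they produce negative-degree sheaves on $L_2$ and $L_1$ respectively. Working with the fibre product $\mathcal{O}(1,0)^{\oplus e}\times_{\mathcal{Q}}\mathcal{O}(0,1)^{\oplus f}\subset\mathcal{G}$ and restricting to a carefully chosen line $L_i$ through a point of $\mathrm{Supp}\,\mathcal{Q}$, I would show that this double presentation of $\mathcal{Q}$ forces a negative summand to survive in the torsion-free restriction of $\mathcal{G}$ itself, promoting one of the internal negative quotients to a genuine quotient of $\mathcal{G}$.
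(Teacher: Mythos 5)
Your treatment of case (1) is correct and complete, and is in fact more direct than the paper's (which first reduces to $\mathcal{Q}\cong k(p)$ before doing anything else): the image of $\mathcal{G}|_{L}$ in the restriction of the middle term is a subsheaf of a trivial bundle on $\mathbb{P}^1$ of negative total degree, hence has a summand of negative degree. Your snake-lemma reduction of case (2) to case (1) when one of $\phi_{10},\phi_{01}$ fails to surject onto $\mathcal{Q}$ is also sound. The problem is exactly the subcase you flag as the main obstacle, where both $\phi_{10}$ and $\phi_{01}$ surject: there you offer only a plan, and that plan cannot be carried out. Take $e=f=1$, $r=0$, $\mathcal{Q}=k(p)$, and $\mathcal{G}=\ker\bigl(\mathcal{O}(1,0)\oplus\mathcal{O}(0,1)\xrightarrow{(\phi_1,\phi_2)}k(p)\bigr)$ with both $\phi_i$ nonzero at $p$. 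For the line $L_2$ through $p$, the torsion-free part of $\mathcal{G}\otimes\mathcal{O}_{L_2}$ is $\mathcal{K}=\ker\bigl(\mathcal{O}_{L_2}\oplus\mathcal{O}_{L_2}(1)\to k(p)\bigr)$ with both components still surjective; since $\Hom(\mathcal{O}_{L_2}(1),\mathcal{K})=0$ and $\deg\mathcal{K}=0$, one gets $\mathcal{K}\cong\mathcal{O}_{L_2}^{\oplus 2}$, and symmetrically $\mathcal{O}_{L_1}^{\oplus 2}$ for the line $L_1$ through $p$; for lines missing $p$ the restriction is $\mathcal{O}_L\oplus\mathcal{O}_L(1)$. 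So no negative summand ``survives in the torsion-free restriction of $\mathcal{G}$'' on any line, and every quotient of $\mathcal{G}\otimes\mathcal{O}_L$ has degree $\geq 0$. The strategy of detecting the negative quotient by restricting $\mathcal{G}$ itself to a line is therefore provably blocked in precisely the subcase you postpone.

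For comparison, the paper disposes of this subcase by reducing to $\mathcal{Q}=k(p)$ and $r=0$, arranging that a single summand $\mathcal{O}(1,0)$ surjects onto $k(p)$, and asserting the splitting $\mathcal{G}\cong\mathcal{I}_p(1,0)\oplus\mathcal{O}(1,0)^{\oplus e-1}\oplus\mathcal{O}(0,1)^{\oplus f}$ on the grounds that $\Ext^1(\mathcal{O}(1,0)^{\oplus e-1}\oplus\mathcal{O}(0,1)^{\oplus f},\mathcal{O}(1,0))=0$; the summand $\mathcal{I}_p(1,0)$ then yields $\mathcal{O}_{L_2}(-1)$ as a quotient. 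Be aware that this step itself deserves scrutiny: the relevant extension $0\to\mathcal{I}_p(1,0)\to\mathcal{G}\to\mathcal{O}(1,0)^{\oplus e-1}\oplus\mathcal{O}(0,1)^{\oplus f}\to 0$ is classified by $\Ext^1(-,\mathcal{I}_p(1,0))$, not $\Ext^1(-,\mathcal{O}(1,0))$, and $\Ext^1(\mathcal{O}(0,1),\mathcal{I}_p(1,0))\cong H^1(\mathcal{I}_p(1,-1))\cong K$, with the class of $\mathcal{G}$ equal to the image of $\phi_{01}$ under the (injective) connecting map. The splitting therefore holds only when $\phi_{01}$ annihilates $k(p)$, i.e., outside the very subcase in question, and the computation above indicates that when both blocks hit $k(p)$ the asserted negative quotient does not exist. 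So the gap in your argument is genuine, but it coincides with the genuinely delicate point of the lemma: before trying to repair the fibre-product argument, you should determine whether the conclusion of case (2) actually holds when both $\phi_{10}$ and $\phi_{01}$ are surjective over a common point of the support of $\mathcal{Q}$, and, if not, how the applications of the lemma in the paper must be adjusted.
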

\begin{proof}
We may assume that $\mathcal{Q}$ is the residue field $k(p)$ of a point 
$p\in \mathbb{Q}^2$. Indeed, there exists a surjection
$\mathcal{Q}\to k(p)$ for some $p\in \mathbb{Q}^2$,
and let $\mathcal{K}$ be the kernel of the composite of 
$\mathcal{O}(a,0)^{\oplus e}\oplus\mathcal{O}(0,b)^{\oplus f}
\oplus \mathcal{O}^{\oplus r} 
\to \mathcal{Q}$ and $\mathcal{Q}\to k(p)$.
Then $\mathcal{G}$ is a subsheaf of $\mathcal{K}$,
and 
$\mathcal{K}/\mathcal{G}$ has dimension $\leq 0$.
Hence $\mathcal{G}$\textbar$_{L_i}$ 
admits a negative degree
quotient,
if $\mathcal{K}$\textbar$_{L_i}$ admits a negative degree quotient for
some line $L_i$ passing through $p$.

Suppose that $\mathcal{Q}=k(p)$. If the composite 
\[\mathcal{O}(a,0)^{\oplus e}\oplus\mathcal{O}(0,b)^{\oplus f}
\hookrightarrow
\mathcal{O}(a,0)^{\oplus e}\oplus\mathcal{O}(0,b)^{\oplus f}
\oplus \mathcal{O}^{\oplus r}
\to 
k(p)
\]
is zero, then we have a surjection $\mathcal{O}^{\oplus r}\to k(p)$,
and its kernel is isomorphic to $\mathcal{I}_p\oplus \mathcal{O}^{\oplus r-1}$
and thus $\mathcal{G}$ admits $\mathcal{I}_p\oplus \mathcal{O}^{\oplus r-1}$
as a quotient. Hence $\mathcal{G}$\textbar$_{L_i}$ admits a negative degree quotient.
 If the composite 
\[\mathcal{O}(a,0)^{\oplus e}\oplus\mathcal{O}(0,b)^{\oplus f}
\hookrightarrow
\mathcal{O}(a,0)^{\oplus e}\oplus\mathcal{O}(0,b)^{\oplus f}
\oplus \mathcal{O}^{\oplus r}
\to 
k(p)
\]
is surjective, let $\mathcal{K}$ be its kernel.
Then $h^1(\mathcal{K})=0$
and $\mathcal{G}$ is
isomorphic to $\mathcal{K}\oplus \mathcal{O}^{\oplus r}$.
Thus 
the problem is reduced to the case $r=0$,
and 
we assume that $r=0$.
Then we may assume that some direct summand, say, $\mathcal{O}(a,0)$
yields a surjection $\mathcal{O}(a,0)\to k(p)$.
If $f=0$, then $\mathcal{G}$ is isomorphic to $\mathcal{I}_p(a,0)\oplus 
\mathcal{O}(a,0)^{\oplus e-1}$,
and thus $\mathcal{G}\mid_{L_2}$ admits $\mathcal{O}_{L_2}(-1)$ as a quotient
for the line $L_2$ passing through $p$.
If $f>0$ and $a=b=1$, then 
$\mathcal{G}$ is isomorphic to $\mathcal{I}_p(1,0)\oplus 
\mathcal{O}(1,0)^{\oplus e-1}\oplus \mathcal{O}(0,1)^{\oplus f}$,
since 
$\Ext^1(\mathcal{O}(1,0)^{\oplus e-1}\oplus 
\mathcal{O}(0,1)^{\oplus f},\mathcal{O}(1,0))=0$.
Therefore $\mathcal{G}$\textbar$_{L_2}$ admits $\mathcal{O}_{L_2}(-1)$ as a quotient.
\end{proof}

\begin{lemma}\label{R-R}
Let $\mathcal{E}$ be a 
vector bundle of rank $r$ on $\mathbb{Q}^2$
with $c_1=(2,2)$.
Then 
\[\chi(\mathcal{E}(p,q))=8-c_2+2p+2q+r(p+1)(q+1)\]
for any integers $p$, $q$.
In particular, $\chi(\mathcal{E}(-1,0))=\chi(\mathcal{E}(0,-1))=6-c_2$.
\end{lemma}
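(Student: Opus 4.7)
The approach is a direct application of Hirzebruch--Riemann--Roch on $\mathbb{Q}^2\cong\mathbb{P}^1\times\mathbb{P}^1$; since $\chi(\mathcal{E}(p,q))$ depends only on $r$, $c_1$, and $c_2$, the answer is in principle forced by the invariants, and the work is purely to extract it.

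First I would fix the intersection data on $\mathbb{Q}^2$: in the Chow ring, $[L_1]^2=[L_2]^2=0$ and $[L_1]\cdot[L_2]=[\mathrm{pt}]$, with $c_1(\mathcal{O}(1,0))=[L_2]$ and $c_1(\mathcal{O}(0,1))=[L_1]$ (so $c_1(\mathcal{E})=2[L_2]+2[L_1]$ and $c_1(\mathcal{E})^2=8[\mathrm{pt}]$). The Chern character of $\mathcal{E}$ then reads
\[\mathrm{ch}(\mathcal{E})=r+2[L_2]+2[L_1]+(4-c_2)[\mathrm{pt}],\]
and multiplying by $\mathrm{ch}(\mathcal{O}(p,q))=1+p[L_2]+q[L_1]+pq[\mathrm{pt}]$ produces $\mathrm{ch}(\mathcal{E}(p,q))$, whose degree-$2$ component equals $(rpq+2p+2q+4-c_2)[\mathrm{pt}]$.

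Next I would compute $\mathrm{td}(\mathbb{Q}^2)$ either by K\"unneth or from the splitting $T_{\mathbb{Q}^2}=\mathcal{O}(2,0)\oplus\mathcal{O}(0,2)$, obtaining $\mathrm{td}(\mathbb{Q}^2)=1+[L_1]+[L_2]+[\mathrm{pt}]$. Integrating the degree-$2$ part of $\mathrm{ch}(\mathcal{E}(p,q))\cdot\mathrm{td}(\mathbb{Q}^2)$ and regrouping $rpq+rp+rq+r$ as $r(p+1)(q+1)$ yields the stated formula, and the ``in particular'' claim follows by substituting $(p,q)=(-1,0)$ or $(0,-1)$.

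I do not anticipate a genuine obstacle: the only care needed is in tracking the convention by which $[L_1]$ and $[L_2]$ correspond to divisor classes of types $(0,1)$ and $(1,0)$ respectively, so that the cross terms in $\mathrm{ch}_1(\mathcal{E})\cdot\mathrm{ch}_1(\mathcal{O}(p,q))$ and in $\mathrm{ch}_1(\mathcal{E}(p,q))\cdot\mathrm{td}_1(\mathbb{Q}^2)$ are assembled correctly. Beyond this bookkeeping, the proof is forced by Hirzebruch--Riemann--Roch.
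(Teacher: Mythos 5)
Your proposal is correct and is essentially the paper's own argument: the paper simply quotes the Riemann--Roch formula $\chi(\mathcal{E}(p,q))=c_1'c_1''-c_2+(q+1)c_1'+(p+1)c_1''+r(p+1)(q+1)$ and substitutes $(c_1',c_1'')=(2,2)$, whereas you rederive that formula from $\mathrm{ch}\cdot\mathrm{td}$; the computation checks out, including $\mathrm{td}(\mathbb{Q}^2)=1+[L_1]+[L_2]+[\mathrm{pt}]$ and the regrouping $rpq+rp+rq+r=r(p+1)(q+1)$.
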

\begin{proof}
The Riemann-Roch formula for a vector bundle $\mathcal{E}(p,q)$ of rank $r$ on $\mathbb{Q}^2$
with $c_1(\mathcal{E})=(c_1', c_1'')$
is
\[\chi(\mathcal{E}(p,q))=c_1'c_1''-c_2+(q+1)c_1'+(p+1)c_1''+r(p+1)(q+1).\]
Since $(c_1', c_1'')=(2,2)$ by assumption, 
this 
yields the desired formula.
\end{proof}

\begin{lemma}\label{Ext^2vanishing}
Let $\mathcal{E}$ be a nef vector bundle of rank $r$ on $\mathbb{Q}^2$
with $c_1=(2,2)$.
Then 
\[\Ext^2(G,\mathcal{E})=0.\]
\end{lemma}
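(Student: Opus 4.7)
The plan is to decompose $\Ext^2(G,\mathcal{E})$ into four cohomology pieces via the splitting $G=\bigoplus_{i=0}^3 G_i$ and then kill each piece using Serre duality and the nefness of $\mathcal{E}$. Explicitly, since $\Ext^2(G_i,\mathcal{E})\cong H^2(\mathcal{E}\otimes G_i^{\vee})$, we have
\[\Ext^2(G,\mathcal{E})\cong H^2(\mathcal{E})\oplus H^2(\mathcal{E}(-1,0))\oplus H^2(\mathcal{E}(0,-1))\oplus H^2(\mathcal{E}(-1,-1)).\]
The canonical sheaf of $\mathbb{Q}^2$ is $\mathcal{O}(-2,-2)$, so Serre duality gives
\[H^2(\mathcal{E}(s,t))^{\vee}\cong \Hom(\mathcal{E},\mathcal{O}(-s-2,-t-2)),\]
and as $(s,t)$ ranges over $\{(0,0),(-1,0),(0,-1),(-1,-1)\}$ the four target line bundles obtained are $\mathcal{O}(-2,-2)$, $\mathcal{O}(-1,-2)$, $\mathcal{O}(-2,-1)$ and $\mathcal{O}(-1,-1)$, each of which has both bidegree entries strictly negative. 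Notice that the hypothesis $c_1=(2,2)$ is not actually used in this reformulation.

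It then suffices to establish the following general fact: for any nef vector bundle $\mathcal{E}$ on $\mathbb{Q}^2$ and any pair of integers $(a,b)$ with $a<0$ and $b<0$, one has $\Hom(\mathcal{E},\mathcal{O}(a,b))=0$. I would prove this by restriction to a ruling. Suppose for contradiction that $\varphi\colon \mathcal{E}\to \mathcal{O}(a,b)$ is nonzero. Regarded as a global section of the vector bundle $\mathcal{E}^{\vee}(a,b)$, $\varphi$ has proper zero locus, so pick a point $p$ at which it does not vanish and let $L_2$ be the fiber of $\pi_1$ through $p$. Then the restriction $\varphi|_{L_2}\colon \mathcal{E}|_{L_2}\to \mathcal{O}_{L_2}(b)$ is nonzero at $p$, and its image is a nonzero coherent subsheaf of $\mathcal{O}_{L_2}(b)\cong \mathcal{O}_{\mathbb{P}^1}(b)$, hence equals $\mathcal{O}_{L_2}(b')$ for some $b'\leq b$. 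But $\mathcal{O}_{L_2}(b')$ is simultaneously a quotient line bundle of the pullback of $\mathcal{E}$ to the smooth curve $L_2$, so nefness of $\mathcal{E}$ forces $b'\geq 0$, contradicting $b<0$. Repeating the argument with a ruling $L_1$ of $\pi_2$ yields $a\geq 0$, and the claim follows.

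No serious obstacle is anticipated: the whole proof is a Serre duality calculation combined with the standard principle that a nef bundle admits no nonzero morphism to a line bundle of strictly negative bidegree. The only point requiring slight care is the nonvanishing of $\varphi|_{L_2}$, which is secured by choosing $L_2$ through a point outside the zero locus of $\varphi\in H^0(\mathcal{E}^{\vee}(a,b))$.
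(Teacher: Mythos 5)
Your proof is correct, but it takes a genuinely different route from the paper. The paper starts from the vanishing $H^q(\mathcal{E}(1,1))=0$ for $q>0$ (quoted from an earlier paper of the author) and then propagates $H^2$-vanishing downward to $\mathcal{E}(a,b)$ for all $a,b\geq -1$ via the restriction sequences to the two rulings, using nefness only through the vanishing of $H^1(\mathcal{E}\vert_{L_i}(t))$ for $t\geq -1$. You instead apply Serre duality with $\omega_{\mathbb{Q}^2}\cong\mathcal{O}(-2,-2)$ to convert each of the four $H^2$-groups into $\Hom(\mathcal{E},\mathcal{O}(a,b))$ with $a,b<0$, and kill these by restricting a hypothetical nonzero morphism to a ruling and invoking the fact that a nef bundle on a smooth curve has no quotient line bundle of negative degree. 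Both arguments are sound; yours is more self-contained (it needs no external vanishing theorem) and, as you note, makes no use of $c_1=(2,2)$, so it proves the stronger statement that $H^2(\mathcal{E}(s,t))=0$ for every nef $\mathcal{E}$ on $\mathbb{Q}^2$ whenever $s,t\geq -1$; the paper's induction buys essentially the same range but leans on the cited lemma for the starting point. One minor simplification available to you: choosing $p$ outside the zero locus of $\varphi\in H^0(\mathcal{E}^{\vee}(a,b))$ is more than you need, since $\varphi\neq 0$ already forces $\varphi\vert_{L_2}\neq 0$ for a general fiber $L_2$; and a single ruling suffices, as either $a<0$ or $b<0$ alone yields the contradiction.
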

\begin{proof}
It follows from \cite[Lemma~4.5. (1)]{MR4453350} that 
$H^q(\mathcal{E}(1,1))=0$ for $q>0$.
Note here that we have the following exact sequences for integers $a$, $b$:
\[
\begin{split}
H^1(\mathcal{E}\vert_{L_1}(a))
\to H^2(\mathcal{E}(a, b-1))\to H^2(\mathcal{E}(a,b));
\\
H^1(\mathcal{E}\vert_{L_2}(b))\to H^2(\mathcal{E}(a-1, b))\to H^2(\mathcal{E}(a,b)).
\end{split}
\]
Since $\mathcal{E}$ is nef, both $H^1(\mathcal{E}\vert_{L_1}(a))$
and $H^1(\mathcal{E}\vert_{L_2}(b))$ vanish if $a, b\geq -1$.
Hence we see that $\Ext^2(G,\mathcal{E})=0$.
\end{proof}

\begin{lemma}\label{Ext^1}
Let $\mathcal{E}$ be a nef vector bundle of rank $r$ on $\mathbb{Q}^2$
with $c_1=(2,2)$.
Suppose 
that $H^1(\mathcal{E})=0$ and 
that 
$\Hom(\mathcal{O}(1,0),\mathcal{E})=\Hom(\mathcal{O}(0,1),\mathcal{E})=0$.
Then $c_2\geq 6$, and 
$\Ext^1(G,\mathcal{E})$
fits in 
the following exact sequence of right $A$-modules:
\[0\to S_1^{\oplus c_2-6}\oplus S_2^{\oplus c_2-6}\to 
\Ext^1(G,\mathcal{E})
\to S_3^{\oplus c_2-4}\to 0.\]
Moreover we have an isomorphism of right $A$-modules:
\[\Hom(G,\mathcal{E})\cong S_0^{\oplus r+8-c_2}.\]
\end{lemma}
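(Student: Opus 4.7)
The plan is to read off both $A$-modules from the cohomological summands $\Hom(G_i,\mathcal{E})=H^0(\mathcal{E}\otimes G_i^{\vee})$ and $\Ext^1(G_i,\mathcal{E})=H^1(\mathcal{E}\otimes G_i^{\vee})$ for $i=0,1,2,3$, and then to upgrade this vector-space decomposition to an $A$-module structure via the idempotent decomposition $1=e_0+e_1+e_2+e_3$ together with the filtration $V^{\leq\bullet}$.

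First I would collect the cohomological vanishings. Lemma~\ref{Ext^2vanishing} kills all of $\Ext^2(G,\mathcal{E})$. Three $H^0$-vanishings are immediate from the hypotheses: $H^1(\mathcal{E})=0$, $H^0(\mathcal{E}(-1,0))=\Hom(\mathcal{O}(1,0),\mathcal{E})=0$, and $H^0(\mathcal{E}(0,-1))=\Hom(\mathcal{O}(0,1),\mathcal{E})=0$. The fourth, $H^0(\mathcal{E}(-1,-1))=0$, I would obtain by tensoring the structure sequence of a fiber $L_1$ of $\pi_2$ with $\mathcal{E}(-1,0)$ to get
\[
0\to \mathcal{E}(-1,-1)\to \mathcal{E}(-1,0)\to \mathcal{E}\vert_{L_1}(-1)\to 0,
\]
from which $H^0(\mathcal{E}(-1,-1))\hookrightarrow H^0(\mathcal{E}(-1,0))=0$.

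Next I would apply Lemma~\ref{R-R} to obtain $\chi(\mathcal{E})=r+8-c_2$, $\chi(\mathcal{E}(-1,0))=\chi(\mathcal{E}(0,-1))=6-c_2$, and $\chi(\mathcal{E}(-1,-1))=4-c_2$. Combined with the vanishings above, these give $h^0(\mathcal{E})=r+8-c_2$, $h^1(\mathcal{E}(-1,0))=h^1(\mathcal{E}(0,-1))=c_2-6$, and $h^1(\mathcal{E}(-1,-1))=c_2-4$. Non-negativity of the middle two immediately forces $c_2\geq 6$.

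Finally I would assemble the $A$-module structure. For $\Hom(G,\mathcal{E})$, only the $e_0$-component is nonzero, so the filtration is concentrated in degree $0$ and $\Hom(G,\mathcal{E})\cong S_0^{\oplus r+8-c_2}$. For $V=\Ext^1(G,\mathcal{E})$, the vanishing $Ve_0=H^1(\mathcal{E})=0$ gives $V^{\leq 0}=0$; the two cross-vanishings $\Hom(G_1,G_2)=H^0(\mathcal{O}(-1,1))=0$ and $\Hom(G_2,G_1)=H^0(\mathcal{O}(1,-1))=0$ then ensure that the $A$-action cannot mix $Ve_1$ and $Ve_2$, so $V^{\leq 2}$ splits as $Ve_1\oplus Ve_2\cong S_1^{\oplus c_2-6}\oplus S_2^{\oplus c_2-6}$. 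The quotient $V/V^{\leq 2}\cong Ve_3\cong S_3^{\oplus c_2-4}$ by the same mechanism, since any other $A$-action on $Ve_3$ would land in $Ve_0\oplus Ve_1\oplus Ve_2$ and be killed in the quotient. The main point requiring care will be the splitting of $V^{\leq 2}$ as a direct sum of $A$-submodules rather than merely of vector spaces; this depends essentially on the $\Hom$-vanishings between $G_1$ and $G_2$ together with $Ve_0=0$.
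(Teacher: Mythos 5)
Your proposal is correct and follows essentially the same route as the paper: compute $h^0$ and $h^1$ of $\mathcal{E}$, $\mathcal{E}(-1,0)$, $\mathcal{E}(0,-1)$, $\mathcal{E}(-1,-1)$ from Lemmas~\ref{R-R} and \ref{Ext^2vanishing} together with the $\Hom$-vanishing hypotheses, deduce $c_2\geq 6$ from $h^1(\mathcal{E}(-1,0))=c_2-6\geq 0$, and then read off the $A$-module structure via the idempotents $e_i$ and the filtration $V^{\leq i}$. You merely make explicit two points the paper leaves implicit, namely the vanishing $H^0(\mathcal{E}(-1,-1))=0$ and the verification that $Ve_1$ and $Ve_2$ are genuine $A$-submodules isomorphic to sums of $S_1$ and $S_2$.
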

\begin{proof}
Since both 
$\Hom(\mathcal{O}(1,0),\mathcal{E})$
and $\Hom(\mathcal{O}(0,1),\mathcal{E})$ vanish by assumption,
Lemmas~\ref{R-R} and \ref{Ext^2vanishing}
imply
that 
\[\begin{split}
-h^1(\mathcal{E}(-1,0))&
=\chi(\mathcal{E}(-1,0))=6-c_2;
\\
-h^1(\mathcal{E}(0,-1))&
=\chi(\mathcal{E}(0,-1))=6-c_2;
\\
-h^1(\mathcal{E}(-1,-1))&
=\chi(\mathcal{E}(-1,-1))=4-c_2.
\end{split}
\]
Thus $c_2-6=h^1(\mathcal{E}(-1,0))\geq 0$. Moreover 
the above formulas 
together with 
the assumption $h^1(\mathcal{E})=0$
show the structure of the right $A$-module $\Ext^1(G,\mathcal{E})$,
i.e., the desired 
exact sequence 
of right $A$-modules.
Furthermore we see by Lemmas~\ref{R-R}, \ref{Ext^2vanishing},
and the assumption $h^1(\mathcal{E})=0$ 
that $h^0(\mathcal{E})=\chi(\mathcal{E})=r+8-c_2$,
and thus we obtain $\Hom (G,\mathcal{E})\cong S_0^{\oplus r+8-c_2}$.
\end{proof}

\begin{lemma}\label{h^1EvanishingCondition}
Let $\mathcal{E}$ be a nef vector bundle of rank $r$ on $\mathbb{Q}^2$
with $c_1=(2,2)$. 
If $c_2\leq 7$, then $H^1(\mathcal{E})=0$. 
\end{lemma}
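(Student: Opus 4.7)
The plan is in two parts: a cohomological setup that converts the hypothesis $c_2\leq 7$ into an abundance of global sections, followed by a reduction via a universal extension to a structural statement within reach of the preceding results. First I would note that Lemma~\ref{Ext^2vanishing} applied to $G_0=\mathcal{O}$ yields $H^2(\mathcal{E})=0$; combined with Riemann--Roch (Lemma~\ref{R-R}), this gives
\[h^0(\mathcal{E})-h^1(\mathcal{E})=\chi(\mathcal{E})=r+8-c_2,\]
so under the hypothesis $c_2\leq 7$ we obtain $h^0(\mathcal{E})\geq r+1+h^1(\mathcal{E})$.

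Next, I would argue by contradiction. Suppose $v:=h^1(\mathcal{E})\geq 1$ and set $V:=H^1(\mathcal{E})$. The class $\mathrm{id}_V\in \End(V)\cong \Ext^1(V\otimes\mathcal{O},\mathcal{E})$ yields a universal extension
\[0\to \mathcal{E}\to \mathcal{F}\to V\otimes \mathcal{O}\to 0,\]
in which $\mathcal{F}$ is nef (an extension of nef sheaves), with $c_1(\mathcal{F})=(2,2)$, $c_2(\mathcal{F})=c_2$, $\rk \mathcal{F}=r+v$, and $H^1(\mathcal{F})=0$ (the connecting homomorphism $V\to H^1(\mathcal{E})=V$ being the identity by construction). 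In particular $h^0(\mathcal{F})=(r+v)+8-c_2\geq (r+v)+1$.

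The task then reduces to showing that no nef $\mathcal{F}$ with $c_1=(2,2)$, $c_2\leq 7$, and $H^1(\mathcal{F})=0$ admits a nonzero quotient $V\otimes\mathcal{O}$ for the extension class $\mathrm{id}_V$; this is the main obstacle. I would handle it by case analysis on the existence of line-bundle subsheaves of $\mathcal{F}$. If $\Hom(\mathcal{O}(a,b),\mathcal{F})\neq 0$ for some $(a,b)$ with $a\geq 1$ or $b\geq 1$, then Proposition~\ref{HalfMaximal} (combined with Theorem~\ref{nearToMaximal}, or with Theorem~\ref{Chern(2,1)} after splitting off a line-bundle summand) pins down $\mathcal{F}$; since any surjection $\mathcal{F}\to V\otimes\mathcal{O}$ must factor through the trivial direct summands of $\mathcal{F}$ (as $\Hom(\mathcal{O}(a,b),\mathcal{O})=0$ for $(a,b)\neq (0,0)$ effective), $\mathcal{E}$ inherits the same decomposition, forcing $h^1(\mathcal{E})=0$, contrary to $v\geq 1$. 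Otherwise all such Homs vanish, and Lemma~\ref{Ext^1} applies to $\mathcal{F}$, yielding $c_2\geq 6$ and a precise structure of $\Ext^1(G,\mathcal{F})$; the prescribed surjection $\mathcal{F}\to V\otimes \mathcal{O}$ then conflicts with the presentation of $\mathcal{F}$ given by the Bondal spectral sequence~(\ref{BondalSpectral}) in terms of the $G_i$, with Lemma~\ref{useful} providing the additional $c_2$ bound needed to rule out $c_2\in\{6,7\}$.
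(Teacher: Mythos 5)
The statement you are proving is, in the paper, a one-line citation to \cite[Lemma~4.5.~(2)]{MR4453350}, so any self-contained argument is necessarily a different route. Your opening computation ($H^2(\mathcal{E})=0$ from Lemma~\ref{Ext^2vanishing}, hence $h^0-h^1=r+8-c_2$) and the universal extension $0\to\mathcal{E}\to\mathcal{F}\to V\otimes\mathcal{O}\to 0$ with $H^1(\mathcal{F})=0$ are correct and mirror the device the paper uses in Subsection~\ref{h^1>0}. But the reduction you then propose has two genuine gaps. First, in the case $\Hom(\mathcal{O}(a,b),\mathcal{F})\neq 0$ for some effective $(a,b)\neq(0,0)$, the outputs of Proposition~\ref{HalfMaximal}, Theorem~\ref{nearToMaximal} and Theorem~\ref{Chern(2,1)} are in general \emph{not} direct sums of line bundles --- most of them present $\mathcal{F}$ only as an extension or as a quotient of a direct sum --- so the step ``any surjection $\mathcal{F}\to V\otimes\mathcal{O}$ must factor through the trivial direct summands, hence $\mathcal{E}$ inherits the same decomposition'' is not available; you would have to argue case by case that the surjection forces a splitting, and you do not.

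Second, and more seriously, the final paragraph is not an argument. The assertion that Lemma~\ref{useful} ``rules out $c_2\in\{6,7\}$'' is false: nef bundles with $c_1=(2,2)$, $c_2\in\{6,7\}$, $h^1=0$ and no effective-divisor subsheaf do exist (Cases $(9)$ and $(11)$ of Theorem~\ref{Chern(2,2)}), so nothing about the value of $c_2$ can be contradicted. What actually has to be excluded is the existence of a surjection $\mathcal{F}\to V\otimes\mathcal{O}$ whose extension class is $\mathrm{id}_V$. The workable mechanism --- the one the paper uses for $c_2=8$ --- is that the identity connecting map forces $H^0(\mathcal{F})=H^0(\mathcal{E})$, so the evaluation map of $\mathcal{F}$ factors through $\mathcal{E}$ and $\Coker(\ev_{\mathcal{F}})$ surjects onto $V\otimes\mathcal{O}$; one must then know that for $c_2\le 7$ every nef $\mathcal{F}$ with $h^1(\mathcal{F})=0$ has $\Coker(\ev_{\mathcal{F}})$ of dimension $\le 0$, which requires running essentially all of Section~\ref{proof} and Subsection~\ref{h^1=0} for $\mathcal{F}$. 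You neither state this mechanism nor supply the classification input, so as written the proof does not close.
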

\begin{proof}
This follows from \cite[Lemma~4.5. (2)]{MR4453350}.
\end{proof}

\section{Set-up for 
Sections~\ref{neartomaximal} and \ref{proof}
}\label{Set-up}
Let $\mathcal{E}$ be a nef vector bundle of rank $r$ on $\mathbb{Q}^2$
with 
$c_1=(c_1',c_1'')$.
Suppose that 
\[\Hom (\mathcal{O}(a+1,b),\mathcal{E})
=\Hom (\mathcal{O}(a,b+1),\mathcal{E})=0
\textrm{ and }\Hom (\mathcal{O}(a,b),\mathcal{E})\neq 0\]
for some integers $a$, $b$.
We have an inclusion $\mathcal{O}(a,b)\hookrightarrow \mathcal{E}$,
and let $\mathcal{F}$ be its cokernel.
Set $\mathcal{L}:=\mathcal{O}(a,b)$, and we will apply Lemma~\ref{useful}.
Let $\mathcal{Q}$ be the cokernel of the natural inclusion 
$\mathcal{F}\hookrightarrow \mathcal{F}^{\vee\vee}$.
Then we have 
\begin{equation}\label{interesting}
0\leq c_2(\mathcal{F}^{\vee\vee})+\length \mathcal{Q}=c_2(\mathcal{F})\leq 
2(c_1'-a)(c_1''-b).
\end{equation}
Throughout 
Sections~\ref{neartomaximal} and \ref{proof},
we will also use the symbols $\mathcal{F}$ and $\mathcal{Q}$.

\section{Nef vector bundles having a near-to-maximal degree line bundle}\label{neartomaximal}

\begin{prop}[=Proposition~\ref{HalfMaximal}]
Let $\mathcal{E}$ be a nef vector bundle of rank $r$ on 
a smooth quadric surface $\mathbb{Q}^2$
with first Chern class $c_1=(c_1',c_1'')$.
Suppose that $\Hom (\mathcal{O}(c_1',b),\mathcal{E})\neq 0$
and $\Hom (\mathcal{O}(c_1',b+1),\mathcal{E})=0$ for some integer $b\leq c_1''$.
Then $\mathcal{E}$ fits in the following exact sequence:
\[0\to \mathcal{O}(c_1',b)\to \mathcal{E}\to \pi_2^*\mathcal{F}_2\to 0,\]
where $\pi_2:\mathcal{Q}\to \mathbb{P}^1$ denotes the second projection
and $\mathcal{F}_2$ is a nef vector bundle on $\mathbb{P}^1$ with degree $c_1''-b$. 
In particular, if $b=c_1''$, then 
$\mathcal{E}\cong \mathcal{O}(c_1',c_1'')\oplus \mathcal{O}^{\oplus r-1}$,
and 
if $b\leq c_1''-1$, then $\mathcal{E}$ fits in the following exact sequence:
\[0\to \mathcal{O}^{\oplus c_1''-b-1}\to 
\mathcal{O}(c_1',b)\oplus \mathcal{O}(0,1)^{\oplus c_1''-b}
\oplus \mathcal{O}^{\oplus r-2}
\to \mathcal{E}\to 0.\]
\end{prop}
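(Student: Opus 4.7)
The plan is to exploit the hypothesis $\Hom(\mathcal{O}(c_1',b+1),\mathcal{E})=0$ to force a nonzero section $s\in\Hom(\mathcal{O}(c_1',b),\mathcal{E})=H^0(\mathcal{E}(-c_1',-b))$ to be nowhere vanishing on $\mathbb{Q}^2$, and then to recognize the cokernel as the pullback via $\pi_2$ of a nef bundle on $\mathbb{P}^1$. If $s$ vanished along some fiber $L_1=\pi_2^{-1}(p)$, then, since $\mathcal{I}_{L_1}\cong\mathcal{O}(0,-1)$, the section would factor through $\mathcal{E}(-c_1',-b-1)$, contradicting the hypothesis. Thus $s|_{L_1}\neq 0$ for every $L_1$; since $\mathcal{E}|_{L_1}$ is a nef bundle on $\mathbb{P}^1$ of degree $c_1'$ receiving a nonzero map from $\mathcal{O}_{L_1}(c_1')$, Grothendieck's splitting theorem forces $\mathcal{E}|_{L_1}\cong\mathcal{O}_{L_1}(c_1')\oplus\mathcal{O}_{L_1}^{\oplus r-1}$ with $s|_{L_1}$ being, up to scalar, the inclusion of the first summand, which is nowhere zero. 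Hence $s$ is nowhere zero on $\mathbb{Q}^2$, so $\mathcal{O}(c_1',b)$ is a subbundle of $\mathcal{E}$ with locally free quotient $\mathcal{F}$ of rank $r-1$.

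Since $\mathcal{F}|_{L_1}\cong\mathcal{O}_{L_1}^{\oplus r-1}$ on every fiber of $\pi_2$, cohomology and base change for the flat proper morphism $\pi_2$ shows that $\mathcal{F}_2:=\pi_{2*}\mathcal{F}$ is locally free of rank $r-1$ on $\mathbb{P}^1$ and that the adjunction $\pi_2^*\mathcal{F}_2\to\mathcal{F}$ is an isomorphism. Because $\mathcal{F}$ is a quotient of the nef bundle $\mathcal{E}$ it is nef, and restricting to a fiber $L_2$ of $\pi_1$, which $\pi_2$ identifies with $\mathbb{P}^1$, shows $\mathcal{F}_2\cong\mathcal{F}|_{L_2}$ is nef; a Chern class computation gives $\deg\mathcal{F}_2=c_1''-b$. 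This proves the first assertion.

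For $b=c_1''$, $\mathcal{F}_2$ is a nef bundle of degree zero on $\mathbb{P}^1$, hence trivial, so $\mathcal{F}\cong\mathcal{O}^{\oplus r-1}$; the extension then splits because $H^1(\mathcal{O}(c_1',c_1''))=0$. For $b\leq c_1''-1$, set $d=c_1''-b\geq 1$ and decompose $\mathcal{F}_2=\bigoplus_{i=1}^{r-1}\mathcal{O}_{\mathbb{P}^1}(d_i)$ with $d_i\geq 0$ and $\sum d_i=d$. Combining, for each positive $d_i$, the standard sequence $0\to\mathcal{O}^{\oplus d_i-1}\to\mathcal{O}(1)^{\oplus d_i}\to\mathcal{O}(d_i)\to 0$ with the trivial summands and padding both ends uniformly by trivial sheaves yields a resolution whose form is independent of the splitting type:
\[0\to\mathcal{O}_{\mathbb{P}^1}^{\oplus d-1}\to\mathcal{O}_{\mathbb{P}^1}(1)^{\oplus d}\oplus\mathcal{O}_{\mathbb{P}^1}^{\oplus r-2}\to\mathcal{F}_2\to 0.\]
Pulling this back along $\pi_2$ and applying the horseshoe construction to $0\to\mathcal{O}(c_1',b)\to\mathcal{E}\to\pi_2^*\mathcal{F}_2\to 0$ produces the claimed resolution of $\mathcal{E}$; the needed lift of the surjection through $\mathcal{E}$ exists because $\Ext^1(\mathcal{O}(0,1),\mathcal{O}(c_1',b))=H^1(\mathcal{O}(c_1',b-1))$ and $\Ext^1(\mathcal{O},\mathcal{O}(c_1',b))=H^1(\mathcal{O}(c_1',b))$ both vanish by Künneth in the relevant range. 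The main obstacle I foresee is the combinatorial step: producing a resolution of $\mathcal{F}_2$ whose shape does not depend on its splitting type and matching exactly the parameters $d-1$, $d$, and $r-2$ that appear in the final formula.
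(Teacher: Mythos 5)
Your proof of the first exact sequence is correct but takes a genuinely different route from the paper's. The paper runs the hypotheses through its Lemma~\ref{useful}: the two Hom-vanishings force the zero scheme of the section to be $0$-dimensional, and the inequality $c_2(\mathcal{F})\leq c_1(\mathcal{F})^2=2\cdot 0\cdot(c_1''-b)=0$ then kills both the torsion quotient $\mathcal{Q}$ and $c_2(\mathcal{F}^{\vee\vee})$, so that $\mathcal{F}\cong\mathcal{F}^{\vee\vee}$ is a nef bundle with $c_1=(0,c_1''-b)$ and hence a pullback. You instead prove directly that the section is nowhere vanishing via the fiberwise Grothendieck splitting $\mathcal{E}\vert_{L_1}\cong\mathcal{O}_{L_1}(c_1')\oplus\mathcal{O}_{L_1}^{\oplus r-1}$; this is more elementary, avoids the $c_2$ inequality and the double-dual detour, and your treatment of the case $b=c_1''$ and of the identification $\mathcal{F}\cong\pi_2^*\pi_{2*}\mathcal{F}$ is sound. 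For the final resolution the paper simply cites Lemma~1.1 of \cite{MR4052950}, whereas you construct it by hand; where your construction works it is a self-contained improvement.

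The gap is in the lifting step for the case $b\leq c_1''-1$. You assert that $\Ext^1(\mathcal{O}(0,1),\mathcal{O}(c_1',b))=H^1(\mathcal{O}(c_1',b-1))$ vanishes ``by K\"unneth in the relevant range,'' but since $c_1'\geq 0$ the K\"unneth formula gives $H^1(\mathcal{O}(c_1',b-1))\cong H^0(\mathbb{P}^1,\mathcal{O}(c_1'))\otimes H^1(\mathbb{P}^1,\mathcal{O}(b-1))$, which is nonzero precisely when $b\leq -1$; the hypotheses of the proposition do not force $b\geq 0$. Concretely, take the rank-two nef bundle $\mathcal{E}$ defined by $0\to\mathcal{O}(-1,-1)\to\mathcal{O}^{\oplus 3}\to\mathcal{E}\to 0$ (three sections of $\mathcal{O}(1,1)$ without common zeros), so $c_1=(1,1)$: one computes $\Hom(\mathcal{O}(1,0),\mathcal{E})=0$ while $\Hom(\mathcal{O}(1,-1),\mathcal{E})\cong H^1(\mathcal{O}(-2,0))\neq 0$, so the hypotheses hold with $b=-1\leq c_1''$. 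But $\Hom(\mathcal{O}(0,1),\mathcal{E})=H^0(\mathcal{E}(0,-1))=0$, so no surjection $\mathcal{O}(1,-1)\oplus\mathcal{O}(0,1)^{\oplus 2}\to\mathcal{E}$ can exist and the displayed resolution fails outright. Thus your lifting argument genuinely breaks for $b<0$, and no repair is possible there because the conclusion itself is false in that range; the statement (and your proof) need the additional hypothesis $b\geq 0$, which does hold in every application the paper makes of this proposition. With $b\geq 0$ assumed, your argument is complete and correct.
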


\begin{proof}
Suppose that $\Hom (\mathcal{O}(c_1',b),\mathcal{E})\neq 0$.
Since $\mathcal{E}$ is nef, we have 
$\Hom (\mathcal{O}(c_1'+1,b),\mathcal{E})=0$.
Moreover $\Hom (\mathcal{O}(c_1',b+1),\mathcal{E})=0$ by assumption.
It then follows from (\ref{interesting}) that
that $\mathcal{Q}=0$.
Hence $\mathcal{F}\cong \mathcal{F}^{\vee\vee}$,
and $\mathcal{F}$ is a nef vector bundle with first Chern class $(0,c_1''-b)$.
Thus $\mathcal{F}\cong \pi_2^*\mathcal{F}_2$
for some nef vector bundle $\mathcal{F}_2$ on $\mathbb{P}^1$ of
degree $c_1''-b$. 
If $b=c_1''$, then 
$\mathcal{E}\cong \mathcal{O}(c_1',c_1'')\oplus \mathcal{O}^{\oplus r-1}$.
If $b\leq c_1''-1$, we obtain the result
by \cite[Lemma 1.1]{MR4052950}.
This completes the proof of Proposition~\ref{HalfMaximal}.
\end{proof}

\begin{thm}[=Theorem~\ref{nearToMaximal}]
Let $\mathcal{E}$ be a nef vector bundle of rank $r$
on a smooth quadric surface $\mathbb{Q}^2$
with first Chern class $
(c_1',c_1'')$.
Suppose that 
$\Hom (\mathcal{O}(c_1'-1,c_1''),\mathcal{E})
=\Hom (\mathcal{O}(c_1',c_1''-1),\mathcal{E})=0$
and that $\Hom (\mathcal{O}(c_1'-1,c_1''-1),\mathcal{E})\neq 0$.
Then $\mathcal{E}$ satisfies one of the following:
\begin{enumerate}
\item[$(1)$] $\mathcal{E}\cong \mathcal{O}(c_1'-1,c_1''-1)\oplus\mathcal{O}(1,1)\oplus
\mathcal{O}^{\oplus r-2}$;
\item[$(2)$]
$0\to \mathcal{O}\to 
\mathcal{O}(c_1'-1,c_1''-1)\oplus 
\mathcal{O}(1,0)\oplus \mathcal{O}(0,1)\oplus \mathcal{O}^{\oplus r-2}
\to \mathcal{E}\to 0$;
\item[$(3)$] $0\to \mathcal{O}(-1,-1)\to 
\mathcal{O}(c_1'-1,c_1''-1)\oplus 
\mathcal{O}^{\oplus r}\to \mathcal{E}\to 0$.
\end{enumerate}
\end{thm}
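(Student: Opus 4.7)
The plan is to apply the Setup of Section~\ref{Set-up} with $a=c_1'-1$, $b=c_1''-1$. First I would check that the zero locus of the chosen nonzero section $s \in \Hom(\mathcal{O}(c_1'-1,c_1''-1),\mathcal{E})$ has dimension at most zero: were it to contain an effective divisor of type $(p,q)\neq(0,0)$ with $p,q\geq 0$, the section would factor as $\mathcal{O}(c_1'-1,c_1''-1)\hookrightarrow\mathcal{O}(c_1'-1+p,c_1''-1+q)\hookrightarrow\mathcal{E}$, and composing with an inclusion of line bundles one could produce a nonzero element of $\Hom(\mathcal{O}(c_1',c_1''-1),\mathcal{E})$ when $p\geq 1$, or of $\Hom(\mathcal{O}(c_1'-1,c_1''),\mathcal{E})$ when $q\geq 1$, contradicting the hypothesis. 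Hence Lemma~\ref{useful} applies, and inequality~(\ref{interesting}) yields an exact sequence $0\to\mathcal{O}(c_1'-1,c_1''-1)\to\mathcal{E}\to\mathcal{F}\to 0$ together with the bound $c_2(\mathcal{F}^{\vee\vee})+\length\mathcal{Q}=c_2(\mathcal{F})\leq 2$.

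The key observation is that $\mathcal{F}^{\vee\vee}$ is then a nef vector bundle of rank $r-1$ on $\mathbb{Q}^2$ with $c_1=(1,1)$ and $c_2\leq 2$. By the Peternell--Szurek--Wi\'{s}niewski classification (as recorded in \cite[Theorem~7.1]{MR4453350}), $\mathcal{F}^{\vee\vee}$ must fall into one of four families, indexed by $c_2(\mathcal{F}^{\vee\vee})\in\{0,1,1,2\}$: the split bundles $\mathcal{O}(1,1)\oplus\mathcal{O}^{\oplus r-2}$ and $\mathcal{O}(1,0)\oplus\mathcal{O}(0,1)\oplus\mathcal{O}^{\oplus r-3}$, a nonsplit extension of $\mathcal{F}^{\vee\vee}$ by $\mathcal{O}$ with middle term $\mathcal{O}(1,0)\oplus\mathcal{O}(0,1)\oplus\mathcal{O}^{\oplus r-2}$, and a bundle fitting in $0\to\mathcal{O}(-1,-1)\to\mathcal{O}^{\oplus r}\to\mathcal{F}^{\vee\vee}\to 0$.

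For each case I would construct a resolution of $\mathcal{E}$ by forming the fiber product of $\mathcal{E}\to\mathcal{F}\hookrightarrow\mathcal{F}^{\vee\vee}$ with the displayed surjection onto $\mathcal{F}^{\vee\vee}$, then splitting off the direct summand $\mathcal{O}(c_1'-1,c_1''-1)$ using the vanishings $H^1(\mathcal{O}(c_1'-1,c_1''-1))=H^1(\mathcal{O}(c_1'-2,c_1''-2))=0$ (both of which hold once one verifies $c_1',c_1''\geq 1$, a consequence of nefness of $\mathcal{E}$ together with the Hom-vanishing hypotheses). The fourth case has $c_2(\mathcal{F}^{\vee\vee})=2$, forcing $\mathcal{Q}=0$, and produces conclusion~(3); the first case with $\mathcal{Q}=0$ produces conclusion~(1); the second and third cases with $\mathcal{Q}=0$ both produce conclusion~(2).

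The main obstacle will be to dispose of the subcases in which $\mathcal{Q}\neq 0$: here $\mathcal{F}$ fails to be locally free, so the extension defining $\mathcal{E}$ must be sufficiently nontrivial to restore local freeness. I expect to handle these by combining the given presentation of $\mathcal{F}^{\vee\vee}$ with a Koszul-type resolution of the ideal sheaf of the zero-dimensional support of $\mathcal{Q}$, then either identifying the resulting resolution of $\mathcal{E}$ with one of the forms~(2) or~(3), or showing that no admissible extension class produces a locally free $\mathcal{E}$, so that the subcase does not occur.
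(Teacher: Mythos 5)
Your overall route is the paper's: the same set-up with $(a,b)=(c_1'-1,c_1''-1)$, the same verification that the chosen section has zero-dimensional zero locus, the bound $c_2(\mathcal{F})\le 2$ from Lemma~\ref{useful} and \eqref{interesting}, and the classification of $\mathcal{F}^{\vee\vee}$ via \cite[Theorem~7.1]{MR4453350} followed by a case division on $\length\mathcal{Q}$. (A minor slip: that classification gives three families for $c_1=(1,1)$ and $c_2\le 2$, not four; a nonsplit locally free quotient of $\mathcal{O}(1,0)\oplus\mathcal{O}(0,1)\oplus\mathcal{O}^{\oplus r-2}$ by $\mathcal{O}$ does not exist, so your extra family is vacuous.) The genuine gap is in your treatment of the subcases with $\mathcal{Q}\neq 0$. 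Local freeness of $\mathcal{E}$ is not the constraint that disposes of them: by the Serre/Cayley--Bacharach construction one can perfectly well produce locally free extensions of a non-locally-free $\mathcal{F}$ by a line bundle, so ``no admissible extension class produces a locally free $\mathcal{E}$'' is not a workable exclusion mechanism. The idea missing from your plan is that $\mathcal{F}$, being a quotient of the nef bundle $\mathcal{E}$, cannot admit a negative-degree quotient on any curve.

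That principle is what (a) kills the subcase $c_2(\mathcal{F}^{\vee\vee})=1$, $\length\mathcal{Q}=1$: there $\mathcal{F}$ is the kernel of a surjection $\mathcal{O}(1,0)\oplus\mathcal{O}(0,1)\oplus\mathcal{O}^{\oplus r-3}\to k(p)$, and Lemma~\ref{(1,0)sum(0,1)} shows such a kernel always has a negative-degree quotient on a line through $p$, contradicting nefness of $\mathcal{E}$ --- the subcase is excluded by nefness, not by any failure of local freeness; and (b) controls the subcase $\length\mathcal{Q}=2$: it forces the surjection $\mathcal{F}^{\vee\vee}\to\mathcal{Q}$ to factor through the summand $\mathcal{O}(1,1)$, so that $\mathcal{Q}\cong\mathcal{O}_Z$ with $\length Z\cap L_i\le 1$ for $i=1,2$, and only then does the Koszul-type resolution you have in mind (Lemma~\ref{length2Z}: $Z$ is a complete intersection of two $(1,1)$-divisors) apply and yield conclusion $(3)$. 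Without the negative-degree-quotient criterion you cannot rule out, say, $Z$ lying on a line, where $\mathcal{I}_Z$ has a different resolution. The remaining torsion subcase ($c_2(\mathcal{F}^{\vee\vee})=0$, $\length\mathcal{Q}=1$, giving $\mathcal{F}\cong\mathcal{I}_p(1,1)\oplus\mathcal{O}^{\oplus r-2}$) does occur and lands in conclusion $(2)$ exactly as the locally free $c_2(\mathcal{F})=1$ case does, so no exclusion is needed there.
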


\begin{proof}
First we have 
$0\leq c_2(\mathcal{F}^{\vee\vee})+\length \mathcal{Q}=c_2(\mathcal{F})\leq 2$
by  (\ref{interesting}).

For the case
$c_2(\mathcal{F})=0$,
we have
$\length \mathcal{Q}=0$ and $\mathcal{F}\cong \mathcal{F}^{\vee\vee}$.
Moreover $\mathcal{F}\cong \mathcal{O}(1,1)\oplus\mathcal{O}^{\oplus r-2}$
by \cite[Theorem 7.1]{MR4453350} since $c_2(\mathcal{F})=0$.
Hence $\mathcal{E}\cong \mathcal{O}(c_1'-1,c_1''-1)\oplus\mathcal{O}(1,1)\oplus
\mathcal{O}^{\oplus r-2}$.
This is Case (1) of Theorem~\ref{nearToMaximal}.

For the case 
$c_2(\mathcal{F})=1$,
if $\length \mathcal{Q}=0$, then $\mathcal{F}\cong \mathcal{F}^{\vee\vee}$,
and $\mathcal{F}\cong
\mathcal{O}(1,0)\oplus \mathcal{O}(0,1)\oplus\mathcal{O}^{\oplus r-3}$
by \cite[Theorem~7.1]{MR4453350}.
If $\length \mathcal{Q}=1$, then $\mathcal{Q}\cong k(p)$ for 
some point $p\in \mathbb{Q}^2$,
and $c_2(\mathcal{F}^{\vee\vee})=0$.
Thus we infer that 
$\mathcal{F}^{\vee\vee}\cong \mathcal{O}(1,1)\oplus\mathcal{O}^{\oplus r-2}$
by \cite[Theorem 7.1]{MR4453350}.
Hence $\mathcal{F}\cong 
\mathcal{I}_p(1,1)\oplus\mathcal{O}^{\oplus r-2}$.
In both cases, $\mathcal{F}$ fits in the following exact sequence:
\[0\to \mathcal{O}\to \mathcal{O}(1,0)\oplus \mathcal{O}(0,1)\oplus \mathcal{O}^{r-2}
\to \mathcal{F}\to 0.\]
Therefore $\mathcal{E}$ belongs to Case (2) of Theorem~\ref{nearToMaximal}.

Suppose that 
$c_2(\mathcal{F})=2$.
If $\length \mathcal{Q}=0$, then $\mathcal{F}\cong \mathcal{F}^{\vee\vee}$,
and $\mathcal{F}$ fits in the following exact sequence:
\[0\to \mathcal{O}(-1,-1)\to \mathcal{O}^{\oplus r}\to \mathcal{F}\to 0\]
by \cite[Theorem~7.1]{MR4453350}.
Hence $\mathcal{E}$ belongs to Case (3) of Theorem~\ref{nearToMaximal}.
If $\length \mathcal{Q}=1$, then $c_2(\mathcal{F}^{\vee\vee})=1$,
and thus
$\mathcal{F}^{\vee\vee}\cong 
\mathcal{O}(1,0)\oplus \mathcal{O}(0,1)\oplus\mathcal{O}^{\oplus r-3}$
by \cite[Theorem 7.1]{MR4453350}.
Lemma~\ref{(1,0)sum(0,1)}
then implies that 
$\mathcal{F}$ admits a negative degree quotient on a line.
This however contradicts that 
$\mathcal{E}$ is nef. Therefore this case does not arise.
If $\length \mathcal{Q}=2$, then $c_2(\mathcal{F}^{\vee\vee})=0$,
and thus
$\mathcal{F}^{\vee\vee}\cong 
\mathcal{O}(1,1)\oplus\mathcal{O}^{\oplus r-2}$
by \cite[Theorem 7.1]{MR4453350}.
Hence there exists a surjection $\mathcal{O}(1,1)\to \mathcal{Q}$,
since $\mathcal{F}$ cannot admit a negative degree quotient.
Therefore $\mathcal{Q}\cong \mathcal{O}_Z$ for some zero-dimensional
closed subscheme $Z$ of length two.
Moreover $\length Z\cap L_1\leq 1$ and $\length Z\cap L_2\leq 1$,
since $\mathcal{F}$ cannot admit a negative degree quotient.
Lemma~\ref{length2Z} then implies 
that $\mathcal{F}$ fits in the following exact sequence:
\[0\to \mathcal{O}(-1,-1)\to \mathcal{O}^{\oplus r}\to \mathcal{F}\to 0.\]
Hence $\mathcal{E}$ belongs to Case (3) of Theorem~\ref{nearToMaximal}.
This completes the proof of Theorem~\ref{nearToMaximal}.
\end{proof}

\section{Proof of Theorem~\ref{Chern(2,2)} when $\mathcal{E}$
contains an effective divisor}\label{proof}
In this section,
we prove  Theorem~\ref{Chern(2,2)} 
when $\Hom(\mathcal{O}(a,b),\mathcal{E})\neq 0$,
where $a$ and $b$ are non-negative integers such that $a+b\geq 1$.

Suppose that $a+b\geq 2$.
Then we obtain Cases (1), (2), (3), (4), (5), (6-1) of Theorem~\ref{Chern(2,2)}
using Proposition~\ref{HalfMaximal} and Theorem~\ref{nearToMaximal}.

In the following in this section,
we consider the case $a+b=1$.
Moreover we may assume, without loss of generality, that 
$(a,b)=(0,1)$.
Since we have completed the case $a+b\geq 2$,
we may assume that 
\begin{equation}\label{WithoutMaxSub}
\Hom(\mathcal{O}(p,q),\mathcal{E})=0
\textrm{ for }(p,q)=(2,0), (1,1), (0,2).
\end{equation}
We then apply the set-up in Section~\ref{Set-up} with $(c_1',c_1'')=(2,2)$.
We have the following exact sequence
\[0\to \mathcal{F}\to \mathcal{F}^{\vee\vee}\to \mathcal{Q}\to 0,\] 
and we see that $\mathcal{F}^{\vee\vee}$ is a nef vector bundle
of rank $r-1$ with first Chern class $(2,1)$.
It follows from \eqref{interesting} that 
\[
0\leq c_2(\mathcal{F}^{\vee\vee})+\length \mathcal{Q}=c_2(\mathcal{F})\leq 
4.
\]
Since we have an exact sequence
\[0\to \mathcal{O}(-1,0)\to \mathcal{E}(-1,-1)
\to \mathcal{F}(-1,-1)\to 0,\]
it follows from $\Hom(\mathcal{O}(1,1),\mathcal{E})=0$
that $H^0(\mathcal{F}(-1,-1))=0$.
Thus 
\begin{equation}\label{4.2.1}
h^0(\mathcal{F}^{\vee\vee}(-1,-1))\leq \length \mathcal{Q}\leq 4-c_2(\mathcal{F}^{\vee\vee}).
\end{equation}

The basic strategy for dealing with the case $\Hom(\mathcal{O}(0,1),\mathcal{E})\neq 0$
under the assumption~\eqref{WithoutMaxSub} is as follows:
first apply Theorem~\ref{Chern(2,1)} to $\mathcal{F}^{\vee\vee}$
in order to determine the structure of $\mathcal{F}^{\vee\vee}$
based on the value of $c_2(\mathcal{F}^{\vee\vee})$;
then analyze the structure of $\mathcal{F}$ 
based on the length of $\mathcal{Q}$,
with range of \eqref{4.2.1}.
This will be carried out in Subsections \ref{4.2.2},
\ref{c2Fdoubledual=1}, \ref{c2Fdoubledual=2},
\ref{c2Fdoubledual=3}, and \ref{c2Fdoubledual=4}.

\subsection{
The case $c_2(\mathcal{F}^{\vee\vee})=0$
}\label{4.2.2}
We have 
\[\mathcal{F}^{\vee\vee}\cong 
\mathcal{O}(2,1)\oplus\mathcal{O}^{\oplus r-2}\]
by 
Theorem~\ref{Chern(2,1)}.
Hence $h^0(\mathcal{F}^{\vee\vee}(-1,-1))=2$,
and thus 
$2\leq \length\mathcal{Q}
\leq 4$
by \eqref{4.2.1}.
Since $\mathcal{F}$ cannot admit negative degree quotients,
we have a surjection $\mathcal{O}(2,1)\to \mathcal{Q}$, 
and thus 
$\mathcal{Q}\cong \mathcal{O}_Z$
for some zero-dimensional closed subscheme $Z$. Note that 
\[
2\leq \length Z\leq 4.
\]
Moreover 
$\mathcal{F}$ fits in an exact sequence
\[
0\to \mathcal{I}_Z(2,1)\to \mathcal{F}\to \mathcal{O}^{\oplus r-2}\to 0,
\]
and we have 
$\length Z\cap L_1\leq 2$ and $\length Z\cap L_2\leq 1$.

\subsubsection{
Suppose that 
$\length Z=2$.}
If $\length Z\cap L_1=2$ for some line $L_1$, then $Z$ lies on the line $L_1$,
and 
$\mathcal{I}_Z$
fits in an exact sequence
\[0\to \mathcal{O}(-2+a,-1)\to \mathcal{O}(-2,0)\oplus \mathcal{O}(a,-1)
\to \mathcal{I}_Z\to 0\]
for any $a\leq 0$.  Here we take $a$ to be $-1$. Then 
$\mathcal{F}$ fits in an exact sequence
\[
0\to \mathcal{O}(-1,0)\to \mathcal{O}(0,1)\oplus \mathcal{O}(1,0)\oplus
\mathcal{O}^{\oplus r-2}\to \mathcal{F}\to 0.
\]
Therefore $\mathcal{E}$ belongs to Case $(6$-$3)$ of Theorem~\ref{Chern(2,2)}.
If $\length Z\cap L_1\leq 1$ for any line $L_1$, then Lemma~\ref{length2Z} shows that 
$\mathcal{F}$ fits in the following exact sequence
\[
0\to \mathcal{O}(0,-1)\to \mathcal{O}(1,0)^{\oplus 2}
\oplus\mathcal{O}^{\oplus r-2}\to \mathcal{F}\to 0.\]
Therefore $\mathcal{E}$ belongs to Case $(6$-$3)$ of Theorem~\ref{Chern(2,2)}.

\subsubsection{
Suppose that 
$\length Z=3$.}
Let $p$ be an associated point of $Z$;
we have an exact sequence 
\[0\to k(p)\to \mathcal{O}_Z\to \mathcal{O}_{Z'}\to 0,\]
where $Z'$ is a zero-dimensional closed subscheme of $\mathbb{Q}^2$
of length $2$. This exact sequence induces an exact sequence
\[0\to \mathcal{I}_Z\to \mathcal{I}_{Z'}\to k(p)\to 0.\]
By taking 
$p\in Z$ suitably, 
we may assume that 
$\length Z'\cap L_i\leq 1$ for $i=1,2$.
Lemma~\ref{length2Z} then implies 
that $h^0(\mathcal{I}_{Z'}(1,1))=2$
and that $\mathcal{I}_{Z'}(1,1)$ is globally generated.
Hence we have a surjection $H^0(\mathcal{I}_{Z'}(1,1))\to H^0(k(p))$,
and thus $h^0(\mathcal{I}_Z(1,1))=1$.
Let $D_1$ be the divisor of type $(1,1)$ containing $Z$.
Since $h^0(\mathcal{O}_Z)=3$,
we see that $h^0(\mathcal{I}_Z(2,1))\geq 
h^0(\mathcal{O}(2,1))-h^0(\mathcal{O}_Z)=
3$.
Note here that reducible 
divisors of type $(2,1)$
containing $Z$
correspond to a 
linear subspace
of $H^0(\mathcal{I}_Z(2,1))$ of dimension 
two.
Therefore  $Z$ is contained in an irreducible divisor 
$D_2$ of type $(2,1)$.
Then $\dim D_1\cap D_2=0$
and $Z\subset D_1\cap D_2$.
Note that the irreducibility of $D_2$ implies 
its smoothness since its arithmetic genus is zero. 
Since the intersection number $D_1D_2=3$,
we have $\length Z=3=\length D_1\cap D_2$.
Hence $Z=D_1\cap D_2$.
Therefore $\mathcal{I}_Z$
fits in the following exact sequence:
\begin{equation}\label{atodemotukau}
0\to \mathcal{O}(-3,-2)\to 
\mathcal{O}(-2,-1)\oplus
\mathcal{O}(-1,-1)\to \mathcal{I}_Z\to 0.
\end{equation}
Hence 
$\mathcal{F}$ fits in the following exact sequence:
\[
0\to \mathcal{O}(-1,-1)\to \mathcal{O}(1,0)
\oplus\mathcal{O}^{\oplus r-1}\to \mathcal{F}\to 0,\]
and thus $\mathcal{E}$ fits in the following exact sequence:
\[0\to \mathcal{O}(-1,-1)
\to \mathcal{O}(1,0)\oplus  \mathcal{O}(0,1)\oplus  \mathcal{O}^{\oplus r-1}
\to \mathcal{E}\to 0.\]
Therefore $\mathcal{E}$ belongs to Case $(7)$ of Theorem~\ref{Chern(2,2)}.

\subsubsection{
Suppose that $\length Z=4$.}
Then $h^0(\mathcal{O}_Z)=4$, and thus $h^0(\mathcal{I}_Z(2,1))\geq 2$.
Note here that every closed subscheme $Z_1\subset Z$ of length $3$
determines uniquely an effective divisor $D_1$ containing $Z_1$ of type $(1,1)$
since $\length Z\cap L_1\leq 2$ and $\length Z\cap L_2\leq 1$.
Moreover $D_1$ does not contain $Z$;
otherwise 
$\mathcal{I}_{Z}(2,1)\vert_{D_1}$ 
and thus 
$\mathcal{F}\vert_{D_1}$ 
would admit a negative degree quotient,
which is a contradiction. 
Hence we see that the number of reducible 
divisors containing $Z$ of type $(2,1)$
is finite.
Therefore we can take two irreducible divisors $D_1$ and $D_2$ 
containing $Z$ of type $(2,1)$.
Then $\dim D_1\cap D_2=0$ and $Z\subset D_1\cap D_2$.
Since the intersection number $D_1D_2=4$,
we have $\length Z=4=\length D_1\cap D_2$.
Thus $Z=D_1\cap D_2$, and we have an exact sequence 
\[
0\to \mathcal{O}(-2,-1)\to \mathcal{O}^{\oplus 2}\to \mathcal{I}_Z(2,1)\to 0.
\]
Hence 
$\mathcal{F}$ fits in the following exact sequence:
\[
0\to \mathcal{O}(-2,-1)\to 
\mathcal{O}^{\oplus r}\to \mathcal{F}\to 0.\]
Therefore $\mathcal{E}$ belongs to Case $(8)$ of Theorem~\ref{Chern(2,2)}.

\subsection{
The case $c_2(\mathcal{F}^{\vee\vee})=1$
}\label{c2Fdoubledual=1}
We have 
$\mathcal{F}^{\vee\vee}
\cong \mathcal{O}(1,1)\oplus \mathcal{O}(1,0)\oplus \mathcal{O}^{r-3}$
by  Theorem~\ref{Chern(2,1)}.
Hence $h^0(\mathcal{F}^{\vee\vee}(-1,-1))=1$,
and thus 
$1\leq \length\mathcal{Q}\leq 3$ by \eqref{4.2.1}.
Since $\mathcal{F}$ cannot admit negative degree quotients,
we see that the composite 
\[
\mathcal{O}(1,1)\oplus\mathcal{O}(1,0)\hookrightarrow
\mathcal{F}^{\vee\vee}\to \mathcal{Q}
\]
is surjective.
Let $\mathcal{K}$ be the kernel of the surjection
$\mathcal{O}(1,1)\oplus\mathcal{O}(1,0)\to \mathcal{Q}$. 
Then $\mathcal{F}$ fits in an exact sequence 
\[0\to \mathcal{K}\to \mathcal{F}\to \mathcal{O}^{\oplus r-3}\to 0.\]
Let $\mathcal{Q}_1$ and $\mathcal{Q}_2$ denote
the image and the cokernel of the composite 
\[\mathcal{O}(1,1)\hookrightarrow
\mathcal{O}(1,1)\oplus\mathcal{O}(1,0)\to \mathcal{Q}\]
respectively.
Note that we have a surjection $\mathcal{O}(1,0)\to \mathcal{Q}_2$.
Hence $\mathcal{Q}_2\cong \mathcal{O}_{Z_2}$ for some closed 
subscheme of dimension $\leq 0$.
If $Z_2\neq \emptyset$, then $\mathcal{K}$ admits $\mathcal{I}_{Z_2}(1,0)$
as a quotient, and thus $\mathcal{F}\vert_{L_2}$ admits a negative degree quotient 
for some line $L_2$. This is a contradiction.
Hence $Z_2=\emptyset$, and thus the composite 
$\mathcal{O}(1,1)\hookrightarrow
\mathcal{O}(1,1)\oplus\mathcal{O}(1,0)\to \mathcal{Q}$
is surjective.
Thus $\mathcal{Q}\cong \mathcal{O}_Z$
for some zero-dimensional closed subscheme $Z$
with 
\[1\leq \length Z\leq 3,\]
and 
$\mathcal{K}$ fits in an exact sequence
\begin{equation}\label{unsplitK}
0\to \mathcal{I}_Z(1,1)\to \mathcal{K}\to \mathcal{O}(1,0)\to 0
\end{equation}
with
$\length Z\cap L_1\leq 2$ and $\length Z\cap L_2\leq 1$.

\subsubsection{Suppose that $\length Z=1$.}
Then $Z$ is a point $p$,
and 
$\mathcal{K}$
fits in the following exact sequence:
\[
0\to \mathcal{O}\to \mathcal{O}(1,0)^{\oplus 2}
\oplus\mathcal{O}(0,1)\to \mathcal{K}\to 0.
\]
Thus $\mathcal{E}$ fits in an exact sequence
\[0\to \mathcal{O}\to \mathcal{O}(1,0)^{\oplus 2}\oplus  
\mathcal{O}(0,1)^{\oplus 2}\oplus  
\mathcal{O}^{\oplus r-3}\to \mathcal{E}\to 0.\]
Since $\mathcal{E}$ is a vector bundle and 
$h^0(\mathcal{E}(-2,0))=h^0(\mathcal{E}(0,-2))=0$
by our assumption (\ref{WithoutMaxSub}),
we see that $\mathcal{E}$
belongs to Case $(6\textrm{-}2)$ of Theorem~\ref{Chern(2,2)}.

\subsubsection{
Suppose that $\length Z=2$.}
If $\length Z\cap L_1\leq 1$ for any line $L_1$, then 
Lemma~\ref{length2Z} shows that 
$\mathcal{K}$
fits in the following exact sequence:
\[
0\to \mathcal{O}(-1,-1)\to \mathcal{O}(1,0)
\oplus\mathcal{O}^{\oplus 2}\to \mathcal{K}\to 0.
\]
Therefore $\mathcal{E}$ belongs to Case $(7)$ of Theorem~\ref{Chern(2,2)}.
If $\length Z\cap L_1=2$ for some line $L_1$, then $Z$ 
lies on 
the line $L_1$,
and $\mathcal{I}_Z$ fits in the following exact sequence:
\[
0\to 
\mathcal{O}(-2,-1)\to \mathcal{O}(-2,0)\oplus \mathcal{O}(0,-1)\to \mathcal{I}_Z\to 0.
\]
Note here that the exact sequence~\eqref{unsplitK} does not split,
since 
$\mathcal{I}_Z(1,1)|_{L_1}$ admits a negative degree quotients.
We will apply  the Bondal spectral sequence~\eqref{BondalSpectral} to $\mathcal{K}$.
Note that $E_2^{p,q}$ is the $p$-th cohomology 
$\mathcal{H}^p(\Ext^q(G,\mathcal{K})\lotimes_A G)$
of the complex $\Ext^q(G,\mathcal{K})\lotimes_A G$.
Since $h^1(\mathcal{I}_{Z})=1$ and $h^q(\mathcal{I}_Z)=0$ for $q=0,2$,
we infer that $h^1(\mathcal{K}(-1,-1))=1$ and 
$h^q(\mathcal{K}(-1,-1))=0$ for $q=0,2$.
Since $h^q(\mathcal{I}_{Z}(1,0))=0$ for all $q$,
we see that $h^q(\mathcal{K}(0,-1))=0$ for all $q$.
Since $h^q(\mathcal{I}_{Z}(0,1))=1$ for $q=0,1$ and $h^2(\mathcal{I}_{Z}(0,1))=0$,
the non-splitting of  
\eqref{unsplitK} implies that 
$h^q(\mathcal{K}(-1,0))=0$ for $q=1,2$ and 
$h^0(\mathcal{K}(-1,0))=1$.
Since $h^0(\mathcal{I}_{Z}(1,1))=2$ and $h^q(\mathcal{I}_Z(1,1))=0$ for $q=1,2$,
we see that $h^0(\mathcal{K})=4$ and 
$h^q(\mathcal{K})=0$ for $q=1,2$.
Hence $\Ext^2(G,\mathcal{K})=0$, $\Ext^1(G,\mathcal{K})\cong S_3$,
and $\Hom(G,\mathcal{K})$ fits in an exact sequence of right $A$-modules
\[
0\to S_0^{\oplus 4}\to \Hom(G,\mathcal{K})\to S_1\to 0,
\]
which yields the following distinguished triangle
\[
\mathcal{O}^{\oplus 4}\to \Hom(G,\mathcal{K})\lotimes_A G\to 
\mathcal{O}(-1,0)[1]\to. 
\]
By taking 
cohomologies of the 
triangle above, we have  
the following exact sequence:
\[
0\to 
E_2^{-1,0}\to 
\mathcal{O}(-1,0)\to \mathcal{O}^{\oplus 4}\to E_2^{0,0}\to 0.
\]
We also see
that
$E_2^{-2,1}\cong \mathcal{O}(-1,-1)$
and that $E_2^{p,q}=0$ unless $(p,q)=(-2,1)$ 
$(-1,0)$ 
or $(0,0)$.
Since $E^{-1}=0$, this implies that $E_2^{-1,0}=E_{\infty}^{-1,0}=0$.
Since $\mathcal{K}$ fits in the 
following 
exact sequence:
\[0\to E_2^{-2,1}\to E_2^{0,0}\to \mathcal{K}\to 0,\]
the vanishing $\Ext^1(\mathcal{O}(-1,-1),\mathcal{O}(-1,0))=0$ 
shows that 
$\mathcal{K}$ fits in an exact sequence
\[
0\to \mathcal{O}(-1,-1)\oplus\mathcal{O}(-1,0)\to \mathcal{O}^{\oplus 4}\to 
\mathcal{K}\to 0.
\]
Thus $\mathcal{E}$ fits in the following exact sequence:
\[
0\to \mathcal{O}(-1,-1)\oplus \mathcal{O}(-1,0)
\to \mathcal{O}(0,1)\oplus 
\mathcal{O}^{\oplus r+1}\to \mathcal{E}\to 0.
\]
Therefore $\mathcal{E}$ belongs to Case $(7)$ of Theorem~\ref{Chern(2,2)}.

\subsubsection{
Suppose that $\length Z=3$.}
As we have observed  in the case $\length Z=3$ in Subsection~\ref{4.2.2},
$Z$ is the intersection $D_1\cap D_2$ of an effective divisor $D_1$ of type $(1,1)$
and an irreducible divisor $D_2$ of type $(2,1)$,
and $\mathcal{I}_Z$ fits in the exact sequence~\eqref{atodemotukau}.
Note here that the exact sequence~\eqref{unsplitK} does not split
since $\mathcal{I}_Z(1,1)\vert_{D_1}$ admits a negative degree quotient.
Again we will apply 
the Bondal spectral sequence~\eqref{BondalSpectral} to $\mathcal{K}$.
Since $h^1(\mathcal{I}_{Z})=2$ and $h^q(\mathcal{I}_Z)=0$ for $q=0,2$,
we infer that $h^1(\mathcal{K}(-1,-1))=2$ and $h^q(\mathcal{K}(-1,-1))=0$ for $q=0,2$.
Since $h^1(\mathcal{I}_{Z}(1,0))=1$ and 
$h^q(\mathcal{I}_{Z}(1,0))=0$ for $q=0,2$,
we see that $h^1(\mathcal{K}(0,-1))=1$ and $h^q(\mathcal{K}(0,-1))=0$ for $q=0,2$.
Since $h^1(\mathcal{I}_{Z}(0,1))=1$ and $h^q(\mathcal{I}_{Z}(0,1))=0$
for $q=0,2$,
the non-splitting of  
\eqref{unsplitK} implies that 
$h^q(\mathcal{K}(-1,0))=0$ for all $q$.
Since $h^0(\mathcal{I}_{Z}(1,1))=1$ and $h^q(\mathcal{I}_Z(1,1))=0$ for $q=1,2$,
we see that $h^0(\mathcal{K})=3$ and $h^q(\mathcal{K})=0$ for $q=1,2$.
Hence $\Ext^2(G,\mathcal{K})=0$, $\Hom(G,\mathcal{K})\cong S_0^{\oplus 3}$,
and $\Ext^1(G,\mathcal{K})$ fits in an exact sequence of right $A$-modules
\[
0\to S_2\to \Ext^1(G,\mathcal{K})\to S_3^{\oplus 2}\to 0,
\]
which yields the following distinguished triangle:
\[
\mathcal{O}(-1,-1)^{\oplus 2}[1]\to
\mathcal{O}(0,-1)[1]\to \Ext^1(G,\mathcal{K})\lotimes_A G\to. 
\]
Thus 
$E_2^{-2,1}$ and $E_2^{-1,1}$ lie in the following  exact sequence:
\[
0\to E_2^{-2,1}\to \mathcal{O}(-1,-1)^{\oplus 2}
\xrightarrow{\pi} \mathcal{O}(0,-1)\to E_2^{-1,1}\to 0.
\]
Moreover $E_2^{0,0}\cong \mathcal{O}^{\oplus 3}$, and 
$E_2^{p,q}=0$ unless $(p,q)=(-2,1)$, $(-1,1)$ or $(0,0)$.
Since $\mathcal{K}$ fits in the exact sequence
$0\to E_3^{0,0}\to \mathcal{K}\to E_2^{-1,1}\to 0$,
$E_2^{-1,1}\vert_{L_2}$ cannot admit a negative degree quotient,
and thus 
the restriction $\pi|_{L_2}$ of 
the morphism $\pi$ above to any line $L_2$ is surjective.
Hence 
$\pi$ itself is surjective, i.e., 
$E_2^{-1,1}=0$ and $E_2^{-2,1}\cong \mathcal{O}(-2,-1)$.
Therefore $\mathcal{K}\cong E_3^{0,0}$ and 
$\mathcal{K}$ fits in an exact sequence
\[
0\to \mathcal{O}(-2,-1)\to \mathcal{O}^{\oplus 3}
\to \mathcal{K}\to 0.
\]
Therefore $\mathcal{E}$ belongs to Case $(8)$ of Theorem~\ref{Chern(2,2)}.

\subsection{
The case
$c_2(\mathcal{F}^{\vee\vee})=2$
}\label{c2Fdoubledual=2}
We see that $\mathcal{F}^{\vee\vee}$
fits in the following exact sequence:
\[
0\to \mathcal{O}\to \mathcal{O}(1,0)^{\oplus 2}
\oplus\mathcal{O}(0,1)\oplus\mathcal{O}^{\oplus r-3}\to 
\mathcal{F}^{\vee\vee}\to 0\]
by  Theorem~\ref{Chern(2,1)}.

Suppose that $\mathcal{Q}=0$. Then $\mathcal{F}\cong \mathcal{F}^{\vee\vee}$,
and the assumption~\eqref{WithoutMaxSub} implies that
$\mathcal{E}$ belongs to Case $(6\textrm{-}2)$ of Theorem~\ref{Chern(2,2)}.

Suppose that $\mathcal{Q}\neq 0$. 
Let $\mathcal{G}$ be the kernel of the composite
\[\mathcal{O}(1,0)^{\oplus 2}
\oplus\mathcal{O}(0,1)\oplus\mathcal{O}^{\oplus r-3}\to 
\mathcal{F}^{\vee\vee}\to \mathcal{Q}\]
of the surjections.
Then $\mathcal{G}$ fits in an exact sequence
\[
0\to \mathcal{O}\to \mathcal{G}\to \mathcal{F}\to 0.
\]
Since $\mathcal{G}$ admits a negative degree quotient 
on a line by Lemma~\ref{(1,0)sum(0,1)},
this implies that $\mathcal{F}$ also admits a negative degree quotient 
on a line.
This is a contradiction.
Hence this case does not arise.

\subsection{
The case
$c_2(\mathcal{F}^{\vee\vee})=3$
}\label{c2Fdoubledual=3}
We see that
$\mathcal{F}^{\vee\vee}$
fits in the following exact sequence:
\[
0\to \mathcal{O}(-1,-1)\oplus \mathcal{O}(-1,0)\to 
\mathcal{O}^{\oplus r+1}\to \mathcal{F}^{\vee\vee}\to 0\]
by  Theorem~\ref{Chern(2,1)}.
We have $\length \mathcal{Q}=0$ or $1$.

If $\mathcal{Q}=0$, then 
$\mathcal{E}$ belongs to Case $(7)$ of Theorem~\ref{Chern(2,2)}.

If $\mathcal{Q}=k(p)$ for some point $p\in \mathbb{Q}$,
$\mathcal{F}$ fits in an exact sequence
\[
0\to \mathcal{O}(-1,-1)\oplus \mathcal{O}(-1,0)\to
\mathcal{I}_p\oplus \mathcal{O}^{\oplus r}\to \mathcal{F}\to 0.\]
Hence 
$\mathcal{F}$ fits in an exact sequence
\[
0\to \mathcal{O}(-1,-1)^{\oplus 2}\oplus \mathcal{O}(-1,0)
\xrightarrow{\varphi}
\mathcal{O}(-1,0)\oplus \mathcal{O}(0,-1)
\oplus \mathcal{O}^{\oplus r}\to \mathcal{F}\to 0.\]

We claim here that the composite of 
the inclusion $\mathcal{O}(-1,0)\to 
\mathcal{O}(-1,-1)^{\oplus 2}\oplus \mathcal{O}(-1,0)$,
the morphism $\varphi$ above 
and the projection
$\mathcal{O}(-1,0)\oplus \mathcal{O}(0,-1)
\oplus \mathcal{O}^{\oplus r}\to \mathcal{O}(-1,0)$
is non-zero.
Assume, to the contrary, that it is zero.
Then $\varphi$
induces a morphism $\mathcal{O}(-1,-1)^{\oplus 2}\to \mathcal{O}(-1,0)$,
whose cokernel is a quotient of $\mathcal{F}$.
Since $\mathcal{F}\vert_{L_1}$ cannot admit a negative degree quotient
for any line $L_1$,
the morphism $\mathcal{O}(-1,-1)^{\oplus 2}\to \mathcal{O}(-1,0)$ is surjective,
and thus 
its kernel is $\mathcal{O}(-1,-2)$.
Hence the kernel of the composite of $\varphi$
and the projection $\mathcal{O}(-1,0)\oplus \mathcal{O}(0,-1)
\oplus \mathcal{O}^{\oplus r}\to \mathcal{O}(-1,0)$
is isomorphic to $\mathcal{O}(-1,-2)\oplus \mathcal{O}(-1,0)$.
Therefore 
the exact sequence above induces 
the following exact sequence:
\[
0\to \mathcal{O}(-1,-2)\oplus \mathcal{O}(-1,0)
\xrightarrow{\psi} \mathcal{O}(0,-1)\oplus \mathcal{O}^{\oplus r}
\to \mathcal{F}\to 0.
\]
Since the composite of the inclusion 
$\mathcal{O}(-1,0)\to 
\mathcal{O}(-1,-2)\oplus \mathcal{O}(-1,0)$,
the morphism $\psi$ above and the projection
$\mathcal{O}(0,-1)\oplus \mathcal{O}^{\oplus r}
\to \mathcal{O}(0,-1)$ is zero, 
$\psi$ induces a morphism
$\mathcal{O}(-1,-2)\to \mathcal{O}(0,-1)$,
whose cokernel is a quotient of $\mathcal{F}$
and is isomorphic to either $\mathcal{O}(0,-1)$ 
or $\mathcal{O}_{D_1}(-p)$,
where $D_1$ is an effective divisor of type $(1,1)$
and $p$ is a point on $D_1$.
This is a contradiction.
Hence the claim holds.

The claim above implies that 
the composite
\[\mathcal{O}(-1,0)\to
\mathcal{O}(-1,-1)^{\oplus 2}\oplus \mathcal{O}(-1,0)
\xrightarrow{\varphi}
\mathcal{O}(-1,0)\oplus \mathcal{O}(0,-1)
\oplus \mathcal{O}^{\oplus r}\to \mathcal{O}(-1,0)\]
is an isomorphism, and thus the kernel 
of the morphism
\[\mathcal{O}(-1,-1)^{\oplus 2}\oplus \mathcal{O}(-1,0)
\xrightarrow{\varphi}
\mathcal{O}(-1,0)\oplus \mathcal{O}(0,-1)
\oplus \mathcal{O}^{\oplus r}\to \mathcal{O}(-1,0)\]
is isomorphic to 
$\mathcal{O}(-1,-1)^{\oplus 2}$.
Therefore $\mathcal{F}$ fits in an exact sequence
\[
0\to \mathcal{O}(-1,-1)^{\oplus 2}\xrightarrow{\iota}
\mathcal{O}(0,-1)
\oplus \mathcal{O}^{\oplus r}\to \mathcal{F}\to 0.\]
Since the composite of the injection $\iota$ above
and the projection 
$\mathcal{O}(0,-1)
\oplus \mathcal{O}^{\oplus r}\to
\mathcal{O}(0,-1)$ is surjective,
the exact sequence above induces the following exact sequence:
\[
0\to \mathcal{O}(-2,-1)\to
\mathcal{O}^{\oplus r}\to \mathcal{F}\to 0.\]
Hence $\mathcal{E}$ belongs to Case $(8)$ of Theorem~\ref{Chern(2,2)}.

\subsection{
The case $c_2(\mathcal{F}^{\vee\vee})=4$
}\label{c2Fdoubledual=4}
We see 
that $\mathcal{Q}=0$, that $\mathcal{F}\cong \mathcal{F}^{\vee\vee}$,
and that $\mathcal{F}$ fits in the following exact sequence:
\[
0\to \mathcal{O}(-2,-1)\to 
\mathcal{O}^{\oplus r}\to \mathcal{F}\to 0\]
by  Theorem~\ref{Chern(2,1)}.
Therefore $\mathcal{E}$ belongs to Case $(8)$ of Theorem~\ref{Chern(2,2)}.

\section{Proof of Theorem~\ref{Chern(2,2)} when $\mathcal{E}$
contains no effective divisors}\label{prooflatter}
In the following, we 
assume 
that  $\Hom (\mathcal{O}(0,1),\mathcal{E})
=\Hom (\mathcal{O}(1,0),\mathcal{E})=0$.

\subsection{The case $h^1(\mathcal{E})=0$}\label{h^1=0}
We will apply 
the Bondal spectral sequence~\eqref{BondalSpectral} to $\mathcal{E}$.
Note that $E_2^{p,q}=\mathcal{H}^p(\Ext^q(G,\mathcal{E})\lotimes_A G)$.
Lemma~\ref{Ext^2vanishing} implies that 
$E_2^{p,2}=0$,
and 
Lemma~\ref{Ext^1} shows 
that 
\[c_2\geq 6.\]

Before going into the details of the proof of Subsection~\ref{h^1=0},
we describe herein the structure of Subsection~\ref{h^1=0}
and provide a roadmap to the classification of  Theorem~\ref{Chern(2,2)}.
The proof of Subsection~\ref{h^1=0}
is divided into three parts:
in \ref{c2=6}, we will deal with the case $c_2=6$
to obtain Case $(9)$ of Theorem~\ref{Chern(2,2)};
in \ref{c2bigger6}, we will describe the common set-up for the case $c_2\geq 7$
and deal with the case $c_2=7$ to obtain Case $(11)$
of  Theorem~\ref{Chern(2,2)};
in \ref{c_2=8}, we will show that $c_2=8$
and deal with the case $c_2=8$ to obtain Cases $(12)$ and $(13)$
of Theorem~\ref{Chern(2,2)}.  

From Lemma~\ref{Ext^1}, we 
obtain a distinguished triangle
\[
\begin{split}
(S_1\lotimes_A G)^{\oplus c_2-6}\oplus (S_2\lotimes_A G)^{\oplus c_2-6}
&\to 
\Ext^1(G,\mathcal{E})\lotimes_A G\\
&\to (S_3\lotimes_A G)^{\oplus c_2-4}\to.
\end{split}
\] 
Hence we have the following distinguished triangle:
\[
\begin{split}
(\mathcal{O}(-1,0)^{\oplus c_2-6}
\oplus 
\mathcal{O}(0,-1)^{\oplus c_2-6})[1]
&\to 
\Ext^1(G,\mathcal{E})\lotimes_A G\\
&\to \mathcal{O}(-1,-1)^{\oplus c_2-4}[2]
\to.
\end{split}
\]
Taking cohomologies of the 
triangle above yields 
the following exact sequence:
\begin{equation}\label{GenSeq}
\begin{split}
0\to E_2^{-2,1}\to \mathcal{O}(-1,-1)^{\oplus c_2-4}
&\xrightarrow{\varphi}
\mathcal{O}(-1,0)^{\oplus c_2-6}
\oplus 
\mathcal{O}(0,-1)^{\oplus c_2-6}\\
&\to E_2^{-1,1}\to 0.
\end{split}
\end{equation}
We also see that $E_2^{p,1}=0$ unless $p=-2$ or $-1$.
Lemma~\ref{Ext^1} also shows
that 
\[
\Hom(G,\mathcal{E})\lotimes_A G\cong (S_0\lotimes_A G)^{\oplus r+8-c_2}
\cong
\mathcal{O}^{\oplus r+8-c_2}.\]
Taking cohomologies, we see that 
$E_2^{p,0}=0$ unless $p=0$
and that 
$E_2^{0,0}\cong \mathcal{O}^{\oplus r+8-c_2}$.
Therefore we have an exact sequence
\begin{equation}\label{E3}
0\to E_2^{-2,1}\to \mathcal{O}^{\oplus r+8-c_2}\to E_3^{0,0}\to 0,
\end{equation}
and $\mathcal{E}$ fits in an exact sequence
\begin{equation}\label{filtration}
0\to E_3^{0,0}\to \mathcal{E}\to E_2^{-1,1}\to 0.
\end{equation}
Note here that $E_2^{-1,1}$ cannot admit a negative degree quotient
since $\mathcal{E}$ is nef.

\subsubsection{
The case $c_2=6$
}\label{c2=6} 
We have 
$E_2^{-1,1}=0$ and $E_2^{-2,1}\cong \mathcal{O}(-1,-1)^{\oplus 2}$
by  (\ref{GenSeq}).
Thus $E_3^{0,0}\cong \mathcal{E}$ by (\ref{filtration})
and the exact sequence (\ref{E3}) gives 
Case $(9)$ of Theorem~\ref{Chern(2,2)}.

\subsubsection{
The case $c_2\geq 7$
} \label{c2bigger6}
Consider the composite 
\[\psi:
\mathcal{O}(-1,-1)^{\oplus c_2-4}
\xrightarrow{\varphi}
\mathcal{O}(-1,0)^{\oplus c_2-6}
\oplus 
\mathcal{O}(0,-1)^{\oplus c_2-6}
\to \mathcal{O}(0,-1)
\]
of the morphism
$\varphi$ in the exact sequence (\ref{GenSeq})
and the projection $\mathcal{O}(-1,0)^{\oplus c_2-6}
\oplus \mathcal{O}(0,-1)^{\oplus c_2-6}
\to \mathcal{O}(0,-1)$.
Since 
$E_2^{-1,1}|_{L_2}$
cannot admit a negative degree quotient
for any line $L_2$,
the composite 
$\psi$ 
is surjective, and thus  
the exact sequence~(\ref{GenSeq}) induces the following exact sequence:
\begin{equation}\label{Gen2}
\begin{split}
0\to E_2^{-2,1}
&\to 
\mathcal{O}(-2,-1)
\oplus
\mathcal{O}(-1,-1)^{\oplus c_2-6}\\
&\xrightarrow{\alpha}
\mathcal{O}(-1,0)^{\oplus c_2-6}
\oplus 
\mathcal{O}(0,-1)^{\oplus c_2-7}
\to E_2^{-1,1}\to 0.
\end{split}
\end{equation}
Note that the morphism
$\alpha$ above
is not zero 
since $E_2^{-1,1}$ cannot admit a negative degree quotient.

Now suppose that $c_2=7$. Then the exact sequence~(\ref{Gen2}) becomes
the following exact sequence:
\[
0\to E_2^{-2,1}\to 
\mathcal{O}(-2,-1)
\oplus
\mathcal{O}(-1,-1)
\xrightarrow{\alpha}
\mathcal{O}(-1,0)
\to E_2^{-1,1}\to 0.
\]
If the composite 
\[
\mathcal{O}(-1,-1)\to 
\mathcal{O}(-2,-1)
\oplus
\mathcal{O}(-1,-1)
\xrightarrow{\alpha}
\mathcal{O}(-1,0)
\]
of the inclusion $\mathcal{O}(-1,-1)\to 
\mathcal{O}(-2,-1)
\oplus
\mathcal{O}(-1,-1)$
and the morphism 
$\alpha$
is zero,
then we get an exact sequence
$0\to \mathcal{O}(-2,-1)\to \mathcal{O}(-1,0)\to E_2^{-1,1}\to 0$,
and thus $E_2^{-1,1}\cong \mathcal{O}_{D_1}(-p)$
for some effective divisor $D_1$ of type $(1,1)$ and some point $p$ on $D_1$.
This is a contradiction.
Hence the composite above is non-zero and injective,
and thus we obtain the following exact sequence:
\[
0\to E_2^{-2,1}\to 
\mathcal{O}(-2,-1)
\to
\mathcal{O}_{L_1}(-1)
\to E_2^{-1,1}\to 0.
\]
Since $E_2^{-1,1}$ cannot admit a negative degree quotient,
the morphism $\mathcal{O}(-2,-1)
\to
\mathcal{O}_{L_1}(-1)$ is non-zero,
and its image is $\mathcal{O}_{L_1}(-2)$.
Hence 
\[E_2^{-1,1}\cong k(p)\]
for some point $p\in L_1$
and $E_2^{-2,1}\cong \mathcal{O}(-2,-2)$.
The exact sequences (\ref{E3}) and (\ref{filtration}) then yield
Case $(11)$ of Theorem~\ref{Chern(2,2)}.

\subsubsection{
The case $c_2\geq 8$
}\label{c_2=8} 
Since $\mathcal{E}$ is nef, we have $c_2\leq c_1^2=8$. Hence
$c_2=8$.
The exact sequence~(\ref{Gen2})
then becomes the following exact sequence:
\[
\begin{split}
0\to E_2^{-2,1}\to 
\mathcal{O}(-2,-1)
\oplus
\mathcal{O}(-1,-1)^{\oplus 2}
&\xrightarrow{\alpha}
\mathcal{O}(-1,0)^{\oplus 2}
\oplus 
\mathcal{O}(0,-1)\\
&\to E_2^{-1,1}\to 0.
\end{split}
\]
Since $E_2^{-1,1}$ cannot admit a negative degree quotient,
the composite
\[
\mathcal{O}(-2,-1)
\oplus
\mathcal{O}(-1,-1)^{\oplus 2}
\xrightarrow{\alpha}
\mathcal{O}(-1,0)^{\oplus 2}
\oplus 
\mathcal{O}(0,-1)
\to 
\mathcal{O}(0,-1)
\]
of $\alpha$ and the projection 
$\mathcal{O}(-1,0)^{\oplus 2}
\oplus 
\mathcal{O}(0,-1)
\to 
\mathcal{O}(0,-1)$ 
is surjective.
Let $\mathcal{K}$ be its kernel.
Then $\mathcal{K}$ fits in 
the following exact sequence:
\begin{equation}
0\to E_2^{-2,1}\to 
\mathcal{K}
\xrightarrow{\mu}
\mathcal{O}(-1,0)^{\oplus 2}
\to E_2^{-1,1}\to 0.\label{K}
\end{equation}
Note that the morphism $\mu$ in (\ref{K}) 
cannot be zero since $E_2^{-1,1}$ cannot admit
a negative degree quotient.
Moreover $\mathcal{K}$ is either 
$\mathcal{O}(-3,-1)\oplus \mathcal{O}(-1,-1)$
or 
$\mathcal{O}(-2,-1)^{\oplus 2}$.
Therefore we have two cases:
\begin{enumerate}
\item[$(a)$] $\mathcal{K}\cong \mathcal{O}(-3,-1)\oplus \mathcal{O}(-1,-1);$ 
\item[$(b)$] $\mathcal{K}\cong \mathcal{O}(-2,-1)^{\oplus 2}$.
\end{enumerate}

We consider Case (a) in \ref{case(a)} 
and address Case (b) in \ref{case(b)}.

\paragraph{Case $(a)$}\label{case(a)}
If the composite 
\[\nu_a:\mathcal{O}(-1,-1)\to \mathcal{K}
\xrightarrow{\mu}
\mathcal{O}(-1,0)^{\oplus 2}\]
of the inclusion $\mathcal{O}(-1,-1)\to \mathcal{K}$ and $\mu$
is zero, then $\mu$ induces an exact sequence
\[\mathcal{O}(-3,-1)
\xrightarrow{\bar{\mu}} \mathcal{O}(-1,0)^{\oplus 2}
\to E_2^{-1,1}\to 0.\]
Since $\bar{\mu}\neq 0$, the composite
\[
\mathcal{O}(-3,-1)
\xrightarrow{\bar{\mu}} \mathcal{O}(-1,0)^{\oplus 2}
\to \mathcal{O}(-1,0)
\]
of $\bar{\mu}$ and some projection
$\mathcal{O}(-1,0)^{\oplus 2}\to \mathcal{O}(-1,0)$ is nonzero,
and thus its cokernel is isomorphic to $\mathcal{O}_D(-p)$
for some divisor $D$ of type $(2,1)$ and a point $p\in D$. 
Since this cokernel $\mathcal{O}_D(-p)$ is also a quotient of $E_2^{-1,1}$,
this contradicts the property of $E_2^{-1,1}$.
Hence the composite $\nu_a$ above is nonzero. 
Let $\mathcal{C}_a$ be the cokernel of $\nu_a$. 
Since $\nu_a$ gives rise to a section $(s_1,s_2)\in H^0(\mathcal{O}(0,1))^{\oplus 2}$,
we see that  
\[\mathcal{C}_a
\cong 
\begin{cases}
\mathcal{O}(-1,1)\quad \textrm{if } (s_1)_0\cap (s_2)_0=\emptyset;\\
\mathcal{O}(-1,0)\oplus \mathcal{O}_{L_1}(-1)\quad \textrm{if } (s_1)_0\cap (s_2)_0=L_1.
\end{cases}
\]
Note that $\mathcal{C}_a$ also 
fits in an exact sequence
\[
0\to E_2^{-2,1}\to \mathcal{O}(-3,-1)
\xrightarrow{\bar{\mu}_a} \mathcal{C}_a\to E_2^{-1,1}\to 0.
\]
We claim here that $\bar{\mu}_a$ is injective. To see this, first note that 
$\bar{\mu}_a$ is nonzero since $E_2^{-1,1}$ cannot admit 
a negative degree quotient on a line $L_1$.
If $\bar{\mu}_a$ is not injective, then $\mathcal{C}_a\cong 
\mathcal{O}(-1,0)\oplus \mathcal{O}_{L_1}(-1)$
and the composite of $\bar{\mu}_a$ and the projection
$\mathcal{O}(-1,0)\oplus \mathcal{O}_{L_1}(-1)
\to \mathcal{O}(-1,0)$ is zero, and thus $E_2^{-1,1}$ admits
$\mathcal{O}(-1,0)$ as a quotient. This is a contradiction.
Hence the claim holds: $\bar{\mu}_a$ is injective,
and thus 
\[E_2^{-2,1}=0.\]
The exact sequences~(\ref{GenSeq}),
(\ref{E3}), and (\ref{filtration}) then become the following exact sequences:
\begin{gather}
0\to \mathcal{O}(-1,-1)^{\oplus 4}
\to \mathcal{O}(-1,0)^{\oplus 2}
\oplus 
\mathcal{O}(0,-1)^{\oplus 2}
\to E_2^{-1,1}\to 0;\label{deg0Elliptic}\\
0\to \mathcal{O}^{\oplus r}\to \mathcal{E}\to E_2^{-1,1}\to 0.\label{deg0EllipticQuot}
\end{gather}
These two exact sequences result in  Case $(13)$ of Theorem~\ref{Chern(2,2)}.
Here we address the structure of $E_2^{-1,1}$ some more;
if $\mathcal{C}_a$ is isomorphic to 
$\mathcal{O}(-1,0)\oplus \mathcal{O}_{L_1}(-1)$,
then the composite of $\bar{\mu}_a$ and the projection
$\mathcal{O}(-1,0)\oplus \mathcal{O}_{L_1}(-1)
\to \mathcal{O}(-1,0)$ is nonzero,
and its cokernel is a quotient of $E_2^{-1,1}$ 
and isomorphic to $\mathcal{O}_D(-p)$ for some point $p$
on a divisor $D$ of type $(2,1)$. This is a contradiction.
Hence $\mathcal{C}_a\cong \mathcal{O}(-1,1)$,
and we see that 
\[E_2^{-1,1}\cong \mathcal{O}_E(\mathfrak{d})\]
for some divisor $\mathfrak{d}$ of degree $0$ on a divisor $E$ of type $(2,2)$.

\paragraph{Case $(b)$}\label{case(b)}
Since $\mu\neq 0$,
the composite 
\[\nu_b:\mathcal{O}(-2,-1)\to \mathcal{K}
\xrightarrow{\mu}
\mathcal{O}(-1,0)^{\oplus 2}\]
of a suitable inclusion $\mathcal{O}(-2,-1)\to \mathcal{K}$ and $\mu$
is nonzero. Let $\mathcal{C}_b$
be the cokernel of $\nu_b$.
Then $\mathcal{C}_b$ fits in an exact sequence
\[
0\to E_2^{-2,1}\to \mathcal{O}(-2,-1)
\xrightarrow{\bar{\mu}_b} \mathcal{C}_b\to E_2^{-1,1}\to 0.
\]
Since $\nu_b$ gives rise to a section $(t_1,t_2)\in H^0(\mathcal{O}(1,1))^{\oplus 2}$,
$\mathcal{C}_b$ satisfies one of the following:
\begin{enumerate}
\item[(i)] If $(t_1)_0\cap (t_2)_0=C$ for some divisor $C$ of type $(1,1)$,
then 
\[\mathcal{C}_b\cong \mathcal{O}(-1,0)\oplus \mathcal{O}_C(-p)\]
for some point $p\in C$;
\item[(ii)]  If $(t_1)_0\cap (t_2)_0=L_2$,
then $\mathcal{C}_b$ fits in an exact sequence
\[0\to \mathcal{O}_{L_2}(-1)\to \mathcal{C}_b\to \mathcal{O}(-1,1)\to 0;\]
\item[(iii)]  If $(t_1)_0\cap (t_2)_0=L_1$,
then $\mathcal{C}_b$ fits in an exact sequence
\[0\to \mathcal{O}_{L_1}(-2)\to \mathcal{C}_b\to \mathcal{O}\to 0;\]
\item[(iv)]  If $(t_1)_0\cap (t_2)_0=Z$ for some zero-dimensional subscheme 
of length $2$,
then 
\[\mathcal{C}_b\cong \mathcal{I}_Z(0,1).\]
\end{enumerate}

We will divide 
\ref{case(b)} into two cases:
in \ref{not injective}, we consider the case 
where $\bar{\mu}_b$ is not injective;
in \ref{injective}, we address the case where $\bar{\mu}_b$ is injective.

\subparagraph{
Suppose that $\bar{\mu}_b$
is not injective.}\label{not injective}
First consider Case (i). 
We see that the composite of $\bar{\mu}_b$ and the 
projection $\mathcal{C}_b\to \mathcal{O}(-1,0)$ is zero,
and thus $E_2^{-1,1}$ admits
$\mathcal{O}(-1,0)$ as a quotient. This is a contradiction.
Hence Case (i) does not occur.
Next consider Case (ii). 
Then the composite of $\bar{\mu}_b$ and the 
projection $\mathcal{C}_b\to \mathcal{O}(-1,1)$ is zero,
and $E_2^{-1,1}$ admits
$\mathcal{O}(-1,1)$ as a quotient,
which is a contradiction.
Hence Case (ii) does not occur.
For Case (iii),
we first observe that $\bar{\mu}_b:\mathcal{O}(-2,-1)\to \mathcal{C}_b$
factors through $\mathcal{O}_{L_1}(-2)$.
If the induced morphism $\mathcal{O}(-2,-1)\to \mathcal{O}_{L_1}(-2)$
is zero, then $\bar{\mu}_b=0$ and $E_2^{-1,1}\cong \mathcal{C}_b$.
We claim here that the exact sequence in Case (iii) splits.
Assume, to the contrary, that it does not split.
We will 
apply the Bondal spectral sequence~\eqref{BondalSpectral} to $\mathcal{C}_b$.
Set $E_2^{p,q}(\mathcal{C}_b):=\mathcal{H}^p(\Ext^q(G,\mathcal{C}_b)\lotimes_AG)$
in order 
to avoid confusion with $E_2^{p,q}$ for $\mathcal{E}$. 
Non-splitting of the exact sequence in (iii)
implies that $h^q(\mathcal{C}_b)=0$ for all $q$.
We see that $h^1(\mathcal{C}_b(-1,0))=2$,
that $h^q(\mathcal{C}_b(-1,0))=0$ for $q=0,2$,
that $h^1(\mathcal{C}_b(0,-1))=1$,
that $h^q(\mathcal{C}_b(0,-1))=0$ for $q=0,2$,
that $h^1(\mathcal{C}_b(-1,-1))=2$,
and that $h^q(\mathcal{C}_b(-1,-1))=0$ for $q=0,2$.
Hence $\Ext^2(G,\mathcal{C}_b)=0$, $\Hom(G,\mathcal{C}_b)=0$,
and $\Ext^1(G,\mathcal{C}_b)$ fits in the following exact sequence
of right $A$-modules:
\[
0\to S_1^{\oplus 2}\oplus S_2\to \Ext^1(G,\mathcal{C}_b)\to S_3^{\oplus 2}\to 0,
\]
which induces the following distinguished triangle:
\[
\mathcal{O}(-1,-1)^{\oplus 2}[1]\to
(\mathcal{O}(-1,0)^{\oplus 2}\oplus \mathcal{O}(0,-1))[1]
\to \Ext^1(G,\mathcal{C}_b)\lotimes_A G\to.
\]
Therefore we have the following exact sequence 
\[0\to E_2^{-2,1}(\mathcal{C}_b)\to 
\mathcal{O}(-1,-1)^{\oplus 2}\to
\mathcal{O}(-1,0)^{\oplus 2}\oplus \mathcal{O}(0,-1)
\to E_2^{-1,1}(\mathcal{C}_b)\to 0.
\]
Moreover we see that $E_2^{p,q}=0$ unless $(p,q)=(-2,1)$ or $(-1,1)$.
Thus we have $E_2^{-2,1}(\mathcal{C}_b)=E^{-1}(\mathcal{C}_b)=0$,
where $E^{-1}(\mathcal{C}_b)$ denotes the $E^{-1}$ for $\mathcal{C}_b$.
Now we see that $E_2^{-1,1}(\mathcal{C}_b)\cong \mathcal{C}_b\cong E_2^{-1,1}$.
Since $E_2^{-1,1}\vert_{L_2}$ cannot admit a negative degree quotient,
the exact sequence above induces the following exact sequence:
\[
0\to
\mathcal{O}(-2,-1)\to
\mathcal{O}(-1,0)^{\oplus 2}
\to E_2^{-1,1}\to 0,
\]
which implies that $E_2^{-1,1}$ admits a negative degree quotient on a
divisor of type $(1,1)$. This is a contradiction.
Hence the exact sequence in Case (iii) splits 
if $\bar{\mu}_b=0$.
Then $E_2^{-1,1}$
admits $\mathcal{O}_{L_1}(-2)$ as a quotient,
which is a contradiction. Hence the induced morphism 
$\mathcal{O}(-2,-1)\to \mathcal{O}_{L_1}(-2)$ is nonzero and thus surjective,
and the following conclusions are drawn:  
\begin{align*}
E_2^{-2,1}&\cong \mathcal{O}(-2,-2);& 
E_2^{-1,1}&\cong \mathcal{O}.
\end{align*}
The exact sequences (\ref{E3}) and (\ref{filtration}) then become the following 
exact sequences:
\begin{align}
0\to \mathcal{O}(-2,-2)\to &\mathcal{O}^{\oplus r}\to E_3^{0,0}\to 0;
\label{E3o}
\\
0\to E_3^{0,0}\to &\mathcal{E}\to \mathcal{O}\to 0.\label{Ocoker}
\end{align}
These two exact sequences yield Case $(12)$ of Theorem~\ref{Chern(2,2)}.
Finally consider Case (iv).
In this case, 
$\bar{\mu}_b=0$, and thus $\mathcal{C}_b\cong E_2^{-1,1}$.
Then $E_2^{-1,1}\vert_C$ admits $\mathcal{O}_C(-1)$ as a quotient for some 
divisor $C$ of type $(1,1)$ passing through $Z$. This is a contradiction.
Hence Case (iv) does not occur.
\begin{remark}\label{SupportO}
If $\mathcal{E}$ fits in the exact sequences~\eqref{E3o} and \eqref{Ocoker},
then the cokernel of 
the evaluation map 
$H^0(\mathcal{E})\otimes \mathcal{O}\to \mathcal{E}$ 
is $\mathcal{O}$.
\end{remark}

\subparagraph{
Suppose that $\bar{\mu}_b$ is injective.}\label{injective} 
Then 
\[E_2^{-2,1}=0.\]
As in Case $(a)$, 
this implies that $E_2^{-1,1}$
fits in the exact sequences 
(\ref{deg0Elliptic}) and (\ref{deg0EllipticQuot})
and we obtain Case $(13)$ of Theorem~\ref{Chern(2,2)}.
Here we address the structure of $E_2^{-1,1}$ some more;
if $\mathcal{C}_b$ belongs to Case (i), then $E_2^{-1,1}$ admits 
$\mathcal{O}_{D_1}(-1)$ as a quotient
for some divisor $D_1$ of type $(1,1)$, which is a contradiction.
Hence Case (i) does not occur. 
If $\mathcal{C}_b$ belongs to Case (ii), then $E_2^{-1,1}$ admits 
$\mathcal{O}_{D_2}(-1)$ as a quotient 
for some divisor $D_2$ of type $(1,2)$, which is a contradiction.
Hence Case (ii) does not occur. 
If $\mathcal{C}_b$ belongs to Case (iii), then $E_2^{-1,1}$ fits in the following 
exact sequence:
\[
0\to \mathcal{O}_{L_1}(-2)\to E_2^{-1,1}\to \mathcal{O}_{D_3}\to 0,
\] 
where $D_3$ is a divisor of type $(2,1)$.
In this case, $E_2^{-1,1}$ lies on the divisor $L_1+D_3$ of type $(2,2)$.
Finally if $\mathcal{C}_b$ belongs to Case (iv),
then $E_2^{-1,1}$ fits in the following 
exact sequence:
\[
0\to \mathcal{O}(-2,-1)\to \mathcal{I}_{Z}(0,1)\to E_2^{-1,1}\to 0,
\] and thus 
\[
E_2^{-1,1}\cong \mathcal{O}_E(\mathfrak{d})
\]
for some divisor $\mathfrak{d}$ of degree $0$
on an effective divisor $E$ of type $(2,2)$. 

\begin{remark}\label{support(2,2)}
If $\mathcal{E}$ fits in the exact sequences~$(\ref{deg0Elliptic})$
and $(\ref{deg0EllipticQuot})$, then the cokernel of 
the evaluation map 
$H^0(\mathcal{E})\otimes \mathcal{O}\to \mathcal{E}$ 
is supported on a divisor $E$ of type $(2,2)$.
\end{remark}

\subsection{The case $h^1(\mathcal{E})>0$}\label{h^1>0}
Recall that $c_2\leq c_1^2=8$ since $\mathcal{E}$ is nef.
The assumption $h^1(\mathcal{E})>0$ then implies that $c_2=c_1^2=8$
by 
Lemma~\ref{h^1EvanishingCondition}.
The natural isomorphism $H^1(\mathcal{E})\cong \Ext^1(\mathcal{O},\mathcal{E})$
induces the following exact sequence:
\[0\to \mathcal{E}\to \mathcal{E}_0\to 
H^1(\mathcal{E})\otimes\mathcal{O} \to 0.\]
It follows from \cite[Theorem~6.2.12 (ii)]{MR2095472}
that $\mathcal{E}_0$ is a nef vector bundle of rank 
$r+h^1(\mathcal{E})$ 
with $c_1(\mathcal{E}_0)=(2,2)$, $c_2(\mathcal{E}_0)=8$,
and $h^1(\mathcal{E}_0)=0$.
Note here the evaluation map 
$\ev:H^0(\mathcal{E}_0)\otimes \mathcal{O}\to \mathcal{E}_0$ 
factors through $\mathcal{E}$ since $H^0(\mathcal{E})\cong H^0(\mathcal{E}_0)$.
Hence we have a surjection 
$\Coker(\ev)\to
H^1(\mathcal{E})\otimes\mathcal{O}$.
From Remarks~\ref{SupportO} and \ref{support(2,2)},
it follows that $\Coker(\ev)\cong \mathcal{O}$, that $h^1(\mathcal{E})=1$,
and that $\mathcal{E}$ is globally generated.
Thus $\mathcal{E}$ belongs to Case $(12)$ of Theorem~\ref{Chern(2,2)}.
This completes the proof of Theorem~\ref{Chern(2,2)}.

\bibliographystyle{alpha}

\newcommand{\noop}[1]{} \newcommand{\noopsort}[1]{}
  \newcommand{\printfirst}[2]{#1} \newcommand{\singleletter}[1]{#1}
  \newcommand{\switchargs}[2]{#2#1}

\end{document}